\newcommand{\rset}{\mathbb{R}}
\newcommand{\zset}{\mathbb{Z}}
\newcommand{\nset}{\mathbb{N}}
\newcommand{\nl}{\nolimits}
\newcommand{\ep}{\varepsilon}
\newcommand{\ind}{\mathbf{1}}
\newcommand{\fl}{\longrightarrow}
\newcommand{\e}{\mathbb{E}}
\newcommand{\p}{\mathbb{P}}
\newcommand{\lp}{\mathrm{L}}
\newcommand{\m}{\mathcal} 
\newcommand{\ot}{\overline{t}}
\newcommand{\ut}{\underline{t}}
\newcommand{\os}{\overline{s}}
\newcommand{\us}{\underline{s}}
\newcommand{\ur}{\underline{r}}
\newcommand{\lip}{\text{\rm Lip}}
\newcommand{\dotafter}[1]{#1.}
\titleformat{\section}[hang]
{\normalfont\large\bfseries}{\thesection.}{.5em}{\dotafter}[]
\titleformat{\subsection}[runin]
{\normalfont\bfseries}{\thesubsection.}{.4em}{}[.]
\titlespacing*{\subsection}{0pt}{3ex plus 1ex minus .2ex}{1em}
\titleformat{\paragraph}[runin]{\normalfont\bfseries}{\theparagraph.}{.4em}{}[.]
\titleformat{\subparagraph}[runin]{\normalfont\bfseries}{\theparagraph.}{.4em}{}[.]
\theoremstyle{plain}
\newtheorem{thm}{Theorem}
\newtheorem{lemme}[thm]{Lemma}
\newtheorem{prop}[thm]{Proposition}
\newtheorem{cor}[thm]{Corollary}
\theoremstyle{definition}
\newtheorem{hyp}{Assumption}
\theoremstyle{remark}
\newtheorem{rem}[thm]{Remark}
\title{\bf Donsker-Type Theorem for BSDEs: \\[1ex] Rate of Convergence}
\author{
Philippe Briand 
\thanks{In memory of Jean Mémin from whom I learned lots of mathematics.}
\thanks{Many thanks to Pierre Baras for very fruitful discussions about the heat equation.}
\and 
Christel Geiss 
\and 
Stefan Geiss 
\and 
Céline Labart
\footnotemark[2]
}
\begin{document}
\maketitle
\begin{abstract}
  In this paper, we study in the Markovian case the rate of convergence in the
  Wasserstein distance of an approximation of the solution to a BSDE given by
  a BSDE which is driven by a scaled random walk as introduced in Briand,
  Delyon and M{\'e}min (Electron. Comm. Pro-\- bab. \textbf{6} (2001), 1--14).
\end{abstract}
\section{Introduction} 
%
%
In this paper, we are concerned with the discretization of solutions to BSDEs of the form
\begin{equation*}
	Y_t = G(B) + \int_t^T f(B_s,Y_s,Z_s)\, ds - \int_t^T Z_s\, dB_s, \quad 0\leq t\leq T,
\end{equation*}
where $B$ is a standard Brownian motion. These equations have been introduced
by Jean-Michel Bismut for linear generators in \cite{B73} and by Étienne
Pardoux and Shige Peng for Lipschitz generators in \cite{PP90}.

In one of the first studies on this topic, in the case where the generator $f$
may depend on $z$ as well, Philippe Briand, Bernard Delyon and Jean
Mémin~\cite{BDM01} proposed an approximation based on Donsker's theorem. They
showed that the solution $(Y,Z)$ to the previous BSDE can be approximated by
the solution $(Y^n,Z^n)$ to the BSDE
\begin{equation*}
  Y^n_t = G(B^n) + \int_{]t,T]} f(B^n_{s-},Y^n_{s-},Z^n_{s})\, d\langle B^n\rangle_s + \int_{]t,T]} Z^{n}_s\, dB^n_s, \quad 0\leq t\leq T,
\end{equation*}
where $B^n$ is the scaled random walk
\begin{equation*}
	B^n_t = \sqrt{T/n}\, \sum_{k=1}^{[nt/T]} \xi_k, \quad 0\leq t\leq T,
\end{equation*}
and $(\xi_k)_{k\geq 1}$ is an i.i.d. sequence of symmetric Bernoulli random
variables. They proved, in full generality, meaning that $G(B)$ is only
required to be a square integrable random variable, that $(Y^n,Z^n)$ converges
to $(Y,Z)$. However, the question of the rate of convergence was left
open. Right now it seems to be hopeless to get a result in this direction for
such a general path-dependent terminal condition $G(B).$ But in the Markovian
case, meaning that $G(B)=g(B_T)$, this problem seems to be tractable, in
particular due to the PDE structure behind. Indeed, $(Y,Z)$ is related to the
semilinear heat equation
\begin{equation}\label{eq:mainPDE}
	\partial_t u(t,x) + \frac{1}{2} \Delta u(t,x) + f(t,x,u(t,x),\nabla u(t,x))=0, \quad (t,x)\in[0,T[\times\rset, \qquad u(T,\cdot)=g,	
\end{equation}
where, under certain regularity conditions,  we can choose 
\begin{equation*}
	Y_s = u(s,B_s) \,\, \text{ and } \,\,   Z_s = \nabla u(s,B_s).
\end{equation*} 
In the case where $B^n$ is the discretized Brownian motion, the link to PDEs
was exploited in \cite{Zha04,BT04} to get the rate of convergence, in the
Markovian case, of the classical scheme for BSDEs.  The convergence of
this scheme was already proved in  \cite[Proposition 13]{BDM02} for a general terminal
condition  and a generator that is Lipschitz in its spatial coordinates  but without any rate of convergence.

Even though the link with PDEs was pointed out in \cite{BDM01}, the rate of
convergence of the approximation of BSDEs given by scaled random walks was
completely open.  In two recent papers, Christel Geiss, Céline Labart and
Antti Luoto~\cite{GLL1,GLL2} give a first answer to this question. They showed
that the error between $(Y^n,Z^n)$ and $(Y,Z)$ is of order $n^{-\ep/4}$ when
$g$ is assumed to be $\ep$-Hölder continuous and $f(t,\cdot)$ Lipschitz
continuous. One of the main arguments in these papers consists in constructing
the random walk from the Brownian motion $B$ using the Skorohod embedding
(see~\cite{Wal03}) together with generalizations of the pioneering work of Jin
Ma and Jianfeng Zhang~\cite{MZ02} on representation theorems for BSDEs. This
approach allows to work with convergence in the $\lp^2$-sense even if the
problem naturally arises in the weak sense. The drawback is that the rate of
convergence $n^{-\ep/4}$ obtained in these papers is not optimal as one can
expect $n^{-\ep/2}$.
\medskip

The objective of our study is to confirm this expected rate $n^{-\ep/2}$. This improvement was 
possible by using a weak limit approach, where the error is considered in the Wasserstein
distance.  Our starting point is a result of Emmanuel Rio~\cite{Rio09} who
proved that, when $T=1$, for all $r\geq 1$, there exists a constant $C_r$ such
that, for all $n\geq 1$, $W_r(B^n_1,G) \leq C_r\, n^{-1/2}$, where $W_r$ is
the $\lp^r$-Wasserstein distance and $G$ a standard normal random variable
(see Section~\ref{sec:scaled_random_walk_and_wasserstein_distance}). Firstly,
we generalize this result to cover the case where $f\equiv 0$ which
corresponds to the heat equation. Then, using the associated PDE, in
particular representation formulas in the spirit of~\cite{MZ02}, we are able
to prove  that
\begin{equation*}
	W_r(Y^n_t,Y_t)\leq C_r\, n^{-(\alpha \wedge \frac{\ep}{2})} \quad \text{ and } \quad W_r(Z^n_t,Z_t)\leq \frac{C_r}{\sqrt{T-t}}\, n^{-(\alpha \wedge \frac{\ep}{2})}
\end{equation*}
for  $t \in [0,T]$ and   $t \in [0,T[$, respectively,
when $g$ and  $f(t,\cdot,y,z)$ are $\ep$-Hölder continuous
and $f(\cdot,x,y,z)$ is $\alpha$-Hölder continuous. We refer to Theorem~\ref{en:mainBSDE} in
Section~\ref{sec:main_results} for the precise statement.
One of the main difficulties in the proof concerned various gradient estimates in order to obtain the estimate for $W_r(Z^n_t,Z_t)$.

For $\ep=1$ and $\alpha \ge 1/2$ we obtain the rate 
$n^{-\frac{1}{2}}$ which is the same rate as obtained from Rio
 for the Random walk approximation of a Gaussian random variable in the Wasserstein distance as mentioned above.

%

\section{Notation} 
%
In all the sequel, $T>0$ is a fixed positive real number. We work on a
complete probability space $(\Omega,\m F,\p)$ carrying a standard real
Brownian motion $\{B_t\}_{0\leq t\leq T},$ and $\{\m F_t\}_{0\leq t\leq T}$
stands for the augmented filtration of $B$ which is right continuous and
complete.

We consider the following BSDE
\begin{equation} \label{BSDE}
	Y_t = g(B_T) + \int_t^T f(s,B_s,Y_s,Z_s)\, ds - \int_t^T Z_s\, dB_s, \quad 0\leq t\leq T.
\end{equation}
Throughout this article, we will assume for the function $g$ defining the
terminal condition and the generator $f$ the following:

\begin{hyp}\label{H:A1}
	\renewcommand{\labelenumi}{(\roman{enumi})}
	There exist $0<\ep\leq 1$ and $0<\alpha\leq 1$ such that it holds:
	\begin{enumerate}
    \item The function $g:\rset\fl\rset$ is $\ep$-H\"older continuous: for all
      $(x,x')\in\rset^2$ one has
		\begin{equation*}
			 \left|g(x)-g\left(x'\right)\right|\leq \|g\|_\ep\, \left|x-x'\right|^{\ep}.
		\end{equation*}
      \item The function $f:[0,T]\times\rset\times\rset\times\rset \fl\rset$
        is $\alpha$-H\"older continuous in time, $\ep$-H\"older continuous in
        space and Lipschitz continuous with respect to $(y,z)$: for all
        $(t,x,y,z)$ and $(t',x',y',z')$ in
        $[0,T]\times\rset\times\rset\times\rset$ one has
		\begin{multline} \label{eq:prop_f}
			\left|f(t,x,y,z)-f\left(t',x',y',z'\right)\right| \\
			 \leq \|f_t\|_\alpha \, \left|t-t'\right|^\alpha + \|f_x\|_\ep\, \left|x-x'\right|^\ep + \|f_y\|_\lip\, \left|y-y'\right|+\|f_z\|_\lip\,\left|z-z'\right|.
		\end{multline}
	\end{enumerate}
\end{hyp}
Most of the time, we do not need to distinguish between $\|f_y\|_\lip$ and
$\|f_z\|_\lip$ and we let
$\|f\|_\lip := \max\left(\|f_y\|_\lip,\|f_z\|_\lip\right)$. 
\medskip

 {\bf Convention:} Later the phrase that {\it a constant $C>0$ depends on $(T,\varepsilon,f,g)$} stands for the
fact that $C$ can be expressed in terms of 
$(T,\varepsilon,\|f_x\|_\varepsilon,\|f_y\|_{\lip},\|f_z\|_\lip,K_f,\|g\|_\varepsilon,g(0))$
where
\begin{align*}
K_f:=\sup_{t \in[0,T]} |f(t,0,0,0)|.
\end{align*}
Similarly, a dependence on  $(T,\alpha,\varepsilon,f,g)$ means an additional dependence on $(\alpha,\|f_t\|_\alpha)$.

\medskip

From \cite[Theorem 4.2]{BDHPS} it is known that under \ref{H:A1}, the BSDE
\eqref{BSDE} has a unique $\lp^p$-solution $(Y,Z)$ for any
$p\in]1,\infty [.$ So
for $(t,x) \in [0,T[\times \rset$ we let  $\left(Y^{t,x}_s,Z^{t,x}_s\right)_{s \in [t,T]}$  be the square integrable solution to the BSDE
\begin{equation}\label{eq:mainBSDE}
	Y^{t,x}_s = g\left(B^{t,x}_T\right) + \int_s^T f\left(r,B^{t,x}_r,Y^{t,x}_r,Z^{t,x}_r\right) dr - \int_s^T Z^{t,x}_r\, dB_r, \quad t\leq s\leq T,
\end{equation} 
where $B^{t,x}_r := x+B_r-B_t$, 
and set, as usual, for $x\in\rset$, $u(T,x):=g(x)$, and, for $(t,x)\in[0,T[\times \rset$,
\begin{align*}
u(t,x):=Y^{t,x}_t = \e\left[g (B^{t,x}_T) +  \int_t^T f\left(r,B^{t,x}_r,Y^{t,x}_r,Z^{t,x}_r \right) dr\right].
\end{align*}
It is well known that the function $u$ is continuous on $[0,T]\times\rset$  (see also Lemma \ref{en:regnabus} below) and  under Lipschitz assumptions in $(x,y,z)$ and for $\alpha \ge \tfrac{1}{2}$ it  is the viscosity solution to~\eqref{eq:mainPDE},  see \cite[Theorem 5.5.8] {Zha17}.~Moreover, in this Markovian setting, for $(t,x)\in[0,T]\times\rset$,  we have $Y^{t,x}_s =  u(s, B^{t,x}_s)$   a.s. for all $s \in [t,T]$. In~\cite[Theorem 3.2]{Zha05}, for a generator which is Lipschitz continuous in all space variables and a measurable $g$ with polynomial growth, J.~Zhang proved that $u$ belongs to $\m C^{0,1}\left([0,T[\times\rset\right)$ and that $Z^{t,x}_s = \nabla u(s,B^{t,x}_s)$ a.e. on $[t,T[\times \Omega$. Moreover, the following representation holds
\begin{equation*}
    \nabla u(t,x)=\e\left[ g(B^{t,x}_T)\frac{B_T-B_t}{T-t}+ \int_t^T
      f\left(r,B^{t,x}_r,Y^{t,x}_r,Z^{t,x}_r \right)\frac{B_r-B_t}{r-t}\, dr\right], \quad (t,x)\in [0,T[\times\rset.
\end{equation*}
If  $F$ is the function given by
\begin{equation}\label{eq:defF}
	F(s,x):=f(s,x,u(s,x),\nabla u(s,x)) \quad \text{for } \,\, (s,x) \in [0,T[\times \rset,
\end{equation}
we thus have 
\begin{equation}\label{eq: u}
u(t,x) =  \e \left[g (B^{t,x}_T) +  \int_t^T F(r,B^{t,x}_r)\, dr\right], \quad (t,x)\in[0,T[\times\rset,
\end{equation}
together with
  \begin{equation}\label{eq:nabla-u}
    \nabla u(t,x)=\e\left[ g(B^{t,x}_T)\frac{B_T-B_t}{T-t} + \int_t^T F(r,B^{t,x}_r)\frac{B_r-B_t}{r-t}\, dr\right], \quad (t,x)\in[0,T[\times\rset.
  \end{equation}
These formulas play an important role in the sequel.

In Section \ref{sec:regularity_results_on_u_u_n_nabla_u_and_delta_n} and in the appendix, we extend these results to the case
where $f(t,\cdot,y,z)$ is $\ep$-H\"older continuous and  make the regularity of $u$ and $\nabla u$  precise.
\medskip

As mentioned before, we are concerned with the approximation of the solution
$\left(Y^{t,x},Z^{t,x}\right)$ to~\eqref{eq:mainBSDE} by a solution to the
BSDE driven by a scaled random walk. To do this, let us consider, on some
probability space, not necessarily $\left(\Omega,\m F,\p\right)$, an
i.i.d. sequence $(\xi_k)_{k\geq 1}$ of symmetric Bernoulli random
variables. For $n\in\nset^*:=\{1,2,3,...\}$ we set $h:=T/n$ and we consider
the scaled random walk
\begin{equation*}
	B^n_t := \sqrt{h}\, \sum_{k=1}^{[t/h]} \xi_k, \quad 0\leq t\leq T,
\end{equation*}
where $[x] := \max\{r\in\zset : r\leq x\}$ for any real number $x.$ As we did
for the Brownian motion, for $x\in\rset$ and $0\leq t\leq s\leq T$ we
put $$B^{n,t,x}_s:=x+B^n_s-B^n_t.$$

Let us introduce some further notation. We denote the ceiling function by
$\lceil x \rceil:=\min \{r\in\zset : r \geq x\}$ for $x\in\rset.$ Moreover, we
set
\begin{equation*}
   n_t:=[t/h], \quad  \ut:=h[t/h]=hn_t  \quad \text{ and } \quad  \ot:=h\lceil t/h \rceil, \quad t \in [0,T].
\end{equation*}
For $n\in\nset^*$ let us consider the following BSDE driven by $B^n$:
\begin{equation*}
	Y^n_t=g(B^n_T)+\int_{]t,T]} f(s,B^n_{s^-},Y^n_{s^-},Z^n_s)\, d\langle B^n\rangle_s - \int_{]t,T]} Z^n_s\, dB^n_s, \quad t \in [0,T].
\end{equation*}
It was shown in \cite{BDM01} that, as soon as $h\, \|f\|_\lip<1$, this BSDE
has a unique square integrable solution $(Y^n,Z^n)$, $Y^n$ being adapted and
$Z^n$ being predictable with respect to the filtration generated by $B^n$. By
construction, $Y^n$ is a piecewise constant càdlàg process with
$Y^n_t=Y^n_{\ut}$. The process $Z^n$ is defined as an element of
$\lp^2(\Omega\times[0,T],d\p \otimes d\langle B^n\rangle),$ where we start
with a $Z^n$ defined only on the points $\{kh: k=1,\ldots,n\}$ and extend it
to $]0,T]$ as a càglàd process $(Z^n_t)_{t\in ]0,T]}$ by setting
$Z^n_t=Z^n_{\ot}$. The previous BSDE is actually a discrete BSDE that can be
solved by hand since, for $k=0,\cdots,n-1$, we have
\begin{align*}
  Y^n_{kh} = Y^n_{(k+1)h}+h\,f\left((k+1)h,B^{n}_{kh},Y^n_{kh},Z^n_{(k+1)h}\right)-\sqrt{h}\,Z^n_{(k+1)h}\xi_{k+1},\quad Y^n_{nh}=g(B^n_T).
\end{align*}
Thus, if $Y^{n}_{(k+1)h}$ is given,
\begin{align}
	Z^{n}_{(k+1)h} & = h^{-1/2}\,\e\left(Y^{n}_{(k+1)h}\xi_{k+1}\,|\,\m F^n_{kh}\right),     \label{z-eqn}   \\
	Y^n_{kh} & = Y^n_{(k+1)h}+h\,f\left((k+1)h,B^{n}_{kh},Y^n_{kh},Z^n_{(k+1)h}\right)-\sqrt{h}\,Z^n_{(k+1)h}\xi_{k+1}  \notag  \\
	& = \e\left(Y^{n}_{(k+1)h}\,|\,\m F^n_{kh}\right) + h\,f\left((k+1)h,B^{n}_{kh},Y^n_{kh},Z^n_{(k+1)h}\right), \label{y-eqn}
\end{align}
where the last equality follows by taking the conditional equation
w.r.t.~$\m F^n_{kh}$ of the second line.

Since we are in a Markovian setting, there is also an analog of the
Feynman-Kac formula. If $u$ is a given function we set
\begin{equation*}
	D^n_{+}u(x):=\frac{1}{2}\left(u(x+\sqrt{h})+u(x-\sqrt{h})\right),\qquad D^n_{-}u(x):=\frac{1}{2}\left(u(x+\sqrt{h})-u(x-\sqrt{h})\right),
\end{equation*}
and  
\begin{equation} \label{nabla}
\nabla^n u(x):=h^{-1/2}\,D^n_{-}u(x).
\end{equation}
\bigskip 
\begin{rem}\label{rem3}
  From the definition of $D^n_+$ and $D^n_-$, we get that if $u$ is $\ep$-Hölder, $D^n_+
  u$ and $D^n_-u$ are also $\ep$-Hölder with constant $\|u\|_{\ep}$.
\end{rem}
\bigskip 
Let $U^n$ be the solution to the finite difference equation, where for $x\in\rset$ and $k=0,\ldots, n-1$ we require 
\begin{align}\label{eq:mainPDEn}
\left \{ \begin{array}{l}	 U^n(kh,x)=D^n_+
    U^n((k+1)h,x)+hf((k+1)h,x,U^n(kh,x),h^{-1/2}D^n_-U^n((k+1)h,x)),  \\
  U^n(nh,x)=g(x).
\end{array} \right .  
\end{align}
Then, we obtain from  \eqref{z-eqn} and \eqref{y-eqn} (cf \cite[Proposition 5.1]{BDM01})  that 
\begin{align*}
 Y^n_{kh} & = U^n\left(kh,\sqrt{h}\,\sum_{i={ 1}}^{k} \xi_i\right), \quad k=0,\ldots,n, \\
 Z^n_{kh} & = \nabla^n U^n\left(kh, \sqrt{h}\,\sum_{i={ 1}}^{k-1} \xi_i\right), \quad k=1,\ldots,n.
\end{align*}
These formulas rewrite in continuous time  to
\begin{align*}
	Y^n_t=Y^n_{\ut}= U^n(\ut,B^n_t), \quad t\in [0,T], \quad  \text{and} \quad Z^n_t=Z^n_{\ot}= \nabla^n U^n\left(\ot,B^n_{t-}\right), \quad t\in  ]0,T].
\end{align*}
If we set, for $0\leq t\leq T$ and $x\in\rset$, $U^n(t,x):= U^n(\ut,x)$, we have $Y^n_t=U^n(t,B^n_t)$.

More generally, for $0\leq t <T$, we define $(Y^{n,t,x},Z^{n,t,x})$ as the solution $Y^{n,\ut,x}=(Y^{n,\ut,x}_s)_{s \in [\ut, T]}$ and 
$Z^{n,\ut,x}=(Z^{n,\ut,x}_s)_{s \in ]\ut, T]}$ to the BSDE 
\begin{equation}\label{eq:mainBSDEn}
	Y^{n,\ut,x}_s=g(B^{n,t,x}_T)+\int_{]s,T]} f(r,B^{n,t,x}_{r^-},Y^{n,\ut,x}_{r^-},Z^{n,\ut,x}_r) d\langle B^n\rangle_r - \int_{]s,T]} Z^{n,\ut,x}_r dB^n_r, 
	\quad  s \in [\ut, T].
\end{equation}
We set $Y^{n,T,x}_T=g(x)$. Then, 
\begin{align}\label{eq:RY}
 	Y^{n,t,x}_s=Y^{n,\ut,x}_{\us}=U^n(s,B^{n,t,x}_s), \quad 0\leq t\leq s\leq T.
\end{align}
 Let us observe that $Z^{n,\ut,x}$ is first defined at the points $t=kh$, $k=n_t+1,\ldots,n$. As before we let $Z^{n,\ut,x}_s := Z^{n,\ut,x}_{\os}$ for $s \in \,]\ut,T]$.
 We have
 \begin{align*}
  Z^{n,t,x}_s=Z^{n,\ut,x}_{\os}=\nabla^n U^n(\os,B^{n,t,x}_{s^-}) \quad \text{ for } s \in \,]\ut,T].
\end{align*}

In particular,
\begin{equation}\label{eq:RZZ}
	 Z^{n,t,x}_s = \nabla^n U^n(\us+h,B^{n,t,x}_s) \quad \text{whenever} \,\, s\in \, ]\ut,T] \backslash \{kh: k=n_t+1,\ldots,n\}.
\end{equation}
Of course, we have
\begin{equation*}
	U^n(t,x)=Y^{n,t,x}_t=Y^{n,\ut,x}_{\ut} \quad \text{ for }  t\in [0,T].
\end{equation*}
Similarly, we define, for $(t,x)\in [0,T[\times\rset$,
\begin{equation} \label{Delta-n}
	\Delta^n(t,x) := \nabla^n U^n(\ut+h,x) = Z^{n,\ut,x}_{\ut+h}.
\end{equation}
With this notation, \eqref{eq:RZZ} rewrites as
\begin{equation}\label{eq:RZ}
	 Z^{n,t,x}_s =\Delta^n(s,B^{n,t,x}_s) \quad \text{whenever} \,\,  s\in \, ]\ut,T] \backslash \{kh: k=n_t+1,\ldots,n\}.
\end{equation}
It follows that
\begin{align*} 
  U^n(t,x)& = \e\left[g\left(B^{n,t,x}_T\right) + h \, \sum_{k=n_t+1}^n f\left(kh,B^{n,t,x}_{(k-1)h},Y^{n,t,x}_{(k-1)h},Z^{n,t,x}_{kh}\right)\right]  \\
			& = \e\left[ g\left(B^{n,t,x}_T\right) + \int_{\ut}^T
            f\left(\os,B^{n,t,x}_{s},Y^{n,t,x}_{s},Z^{n,t,x}_{s}\right) ds \right],
\end{align*}
which rewrites, taking into account \eqref{eq:RY} and \eqref{eq:RZ}, to
\begin{align}  \label{U-n}
	U^n(t,x) & = \e\left[g\left(B^{n,t,x}_T\right)+ \int_{\ut}^T f\left(\os,B^{n,t,x}_s, U^n(s,B^{n,t,x}_s),\Delta^n (s,B^{n,t,x}_{s})\right) ds\right] \notag  \\
	& = \e\left[g\left(B^{n,t,x}_T\right)+ \int_{\ut}^T f\left(\os,\Theta^{n,t,x}_s\right) ds\right],
\end{align}
where
\begin{align}  \label{theta-n}
\Theta^{n,t,x}_s := \left(B^{n,t,x}_s,U^{n}(s,B^{n,t,x}_s),\Delta^n(s,B^{n,t,x}_s)\right).
\end{align}
We will prove in Section~\ref{sec:main_results} that $(U^n,\Delta^n)$ converges to $(u,\nabla u)$.

From now on we assume that $n \geq n_0(T,\|f\|_\lip)$ where $n_0(T,\|f\|_\lip) \in\nset^*$ is the integer given in Lemma~\ref{en:aeDB} in the appendix  and which 
automatically implies also existence and uniqueness of solutions because $n_0 > T\|f\|_\lip.$ 


\section{Scaled random walk and Wasserstein distance} 
\label{sec:scaled_random_walk_and_wasserstein_distance}
One starting point of our paper is the following result of Emmanuel Rio \cite{Rio09} (Theorem 2.1); see also \cite{Rio11}. This result covers, up to a generalization, the case where the generator vanishes, i.e. $f\equiv 0$.

Let $\psi$ be the convex function defined by $\psi(x)=e^{|x|}-1$. The Orlicz norm associated to this function $\psi$  of any real random variable $X$ is given by
\begin{equation*}
	\|X\|_\psi := \inf \{ a>0 : \e\left[\psi(X/a)\right] \leq 1\}, \qquad \inf\emptyset := +\infty.
\end{equation*}
Let us recall that, for any $r\geq 1$,
\begin{equation}\label{eq:orlicz}
	\sup_{x > 0} \left\{ \tfrac{x^r}{ \psi(x)} \right\} < +\infty, \qquad \|X\|_{\lp^r} \leq \left(\sup_{x > 0} \left\{ \tfrac{x^r}{ \psi(x)} \right\}\right)^{1/r} \, \|X\|_\psi.
\end{equation}
Let $X$ and $Y$ be two random variables end let us denote by $\mu$ the law of $X$ and by $\nu$ the law of $Y$. With the usual abuse of notation, the Wasserstein distance associated to $\psi$ is defined by
\begin{equation*}
	W_\psi(\mu,\nu)=W_\psi(X,Y) := \inf\left\{ \| X-Y\|_\psi : \text{law}(X)= \mu,\,  \text{law}(Y) = \nu\right\}.
\end{equation*}

Let $(X_k)_{k\geq 1}$ be an i.i.d. sequence of random variables with $\e\left[X\right]=0$, $\e\left[X^2\right]=1,$ and such that, for some $\sigma >0$, $\e\left[e^{\sigma |X|}\right]<+\infty$. Let $G$ be a  
standard normal random variable. In \cite[Theorem 2.1]{Rio09}, Emmanuel Rio proved that there exists a constant $C>0$ such that, for $n\geq 1$,
\begin{equation*}
	W_{\psi} \left( n^{-1/2}S_n,G\right) \leq C \, n^{-1/2}, \quad\text{where}\quad S_n=X_1+\ldots+X_n.
\end{equation*}
As a byproduct, for any  $r \ge  1$, there exists a constant $c_r>0$ such
that

\begin{equation*}
	W_{r} \left( n^{-1/2}S_n,G\right) \leq c_r \, n^{-1/2},
\end{equation*}
where $W_{r}$ stands for the $\lp^r$-Wasserstein distance
\begin{equation} \label{en: L-r-Wasserstein}
	W_r(\mu,\nu)=W_r(X,Y) := \inf\left\{ \e\left[|X-Y|^r\right]^{1/r} :  \text{law}(X)= \mu,\,  \text{law}(Y) = \nu \right\}. 
  \end{equation}
  We have also  the result of Kantorovich-Rubinstein, i.e.
\begin{equation}\label{eq:KR}
	W_1(\mu,\nu)=W_1(X,Y) = \sup\{ \e\left[f(X)\right]-\e\left[f(Y)\right] : \|f\|_\lip \leq 1\}.
\end{equation}

\begin{rem}
	We  could also consider the case where $0<r<1$ by using the fact that, in this case, $\e(|X-Y|^r)$ is a distance  (see the arguments in \cite[Section 7.1]{Ambrosio05}). 
 In general, we have $W_p(\mu,\nu)=W_q(\mu,\nu) $  for $0<p<q< \infty.$
\end{rem}

Let us start with a straightforward generalization of Rio's result.

\begin{prop}\label{en:fromRio}
	There exists a $C> 0$ such that, for all $x\in\rset$ and all $0\leq t\leq s\leq T$,
	\begin{equation*}
		W_{\psi}\left(B^{n,t,x}_s, B^{t,x}_s\right) \leq C\,\left(\frac{T}{n}\right)^{1/2}.
	\end{equation*}
\end{prop}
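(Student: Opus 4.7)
The plan is to reduce to Rio's theorem by a simple coupling, after stripping off the deterministic shift and matching scales.

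First, since $B^{n,t,x}_s - B^{t,x}_s = (B^n_s-B^n_t) - (B_s-B_t)$, the initial condition $x$ drops out and it suffices to estimate $W_\psi(B^n_s-B^n_t, B_s-B_t)$. Set $h=T/n$, $m := n_s - n_t = [s/h]-[t/h]$, so that
\begin{equation*}
  B^n_s - B^n_t = \sqrt{h}\, \sum_{k=n_t+1}^{n_s} \xi_k,
\end{equation*}
a rescaled sum of $m$ i.i.d.\ symmetric Bernoulli variables (independent of those used to build $B^n$ up to time $t$, by stationarity of increments). On the Gaussian side, realize $B_s-B_t = \sqrt{s-t}\, G$ with $G\sim\mathcal N(0,1)$. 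Note that $|mh-(s-t)|\le 2h$ and that by concavity of the square root, $|\sqrt{mh}-\sqrt{s-t}|\le \sqrt{2h}$.

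The main step is to apply Rio's theorem \cite[Theorem~2.1]{Rio09} to the sum $S_m=\xi_1+\cdots+\xi_m$, which gives for every $m\ge 1$ a coupling between $S_m/\sqrt m$ and some standard normal $G$ such that
\begin{equation*}
  \left\| \tfrac{1}{\sqrt m}\, S_m - G\right\|_\psi \le \frac{C_0}{\sqrt m}.
\end{equation*}
Using this same $G$ to represent $B_s-B_t$, the triangle inequality gives
\begin{align*}
  W_\psi(B^n_s-B^n_t, B_s-B_t)
  &\le \left\| \sqrt{mh}\left(\tfrac{1}{\sqrt m}\,S_m - G\right)\right\|_\psi
     + \left|\sqrt{mh}-\sqrt{s-t}\,\right|\, \|G\|_\psi \\
  &\le \sqrt{mh}\cdot \frac{C_0}{\sqrt m} + \sqrt{2h}\,\|G\|_\psi
   \;\le\; C_1\, \sqrt{h}.
\end{align*}

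When $m=0$ one has $s-t<h$ and $B^n_s-B^n_t=0$, so $W_\psi(B^n_s-B^n_t,B_s-B_t) \le \|\sqrt{s-t}\,G\|_\psi \le \sqrt{h}\,\|G\|_\psi$, which is again of the required order. The only real subtlety is matching the two scales $\sqrt{mh}$ and $\sqrt{s-t}$; the bound $|\sqrt{mh}-\sqrt{s-t}|\le\sqrt{2h}$ absorbs the defect into a term of the same order $\sqrt{h}=\sqrt{T/n}$, which is precisely the claimed rate.
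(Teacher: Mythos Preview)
Your proof is correct and follows essentially the same route as the paper's: strip off the shift $x$, handle the degenerate case $m=0$ directly, and for $m\ge 1$ apply Rio's bound to $S_m/\sqrt m$ and then absorb the scale mismatch between $\sqrt{mh}$ and $\sqrt{s-t}$ via a Gaussian term of order $\sqrt h$. The only cosmetic difference is that the paper works with the triangle inequality for $W_\psi$ at the level of distributions (inserting the intermediate point $B_{\us}-B_{\ut}$ and bounding $W_\psi\big(\mathcal N(0,\us-\ut),\mathcal N(0,s-t)\big)$ by a convolution argument), whereas you build one explicit coupling and use the triangle inequality for $\|\cdot\|_\psi$; the resulting bounds are identical since $mh=\us-\ut$ and $|\sqrt{mh}-\sqrt{s-t}|\le\sqrt{|mh-(s-t)|}$. (Incidentally, $|mh-(s-t)|<h$, so your constant $2$ is not needed.)
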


As a byproduct, taking into account~\eqref{eq:orlicz}, for any  $r \ge 1$, there exists a $c_r>0$ such that, for all $x\in\rset$ and  all $0\leq t\leq s\leq T$, 
\begin{equation}\label{eq:Wr}
	W_{r}\left(B^{n,t,x}_s, B^{t,x}_s\right) \leq c_r \left(\frac{T}{n}\right)^{1/2}.
\end{equation}

\begin{proof}[Proof of Proposition \ref{en:fromRio}]
	We have, for any $x \in \rset$ and all $0\leq t\leq s\leq T$,
	\begin{equation*}
		W_\psi\left(x + B^n_s-B^n_t, x+B_s-B_t\right) = W_\psi\left(B^n_s-B^n_t, B_s-B_t\right).
	\end{equation*}
	If $\us=\ut,$ then $B^n_t-B^n_s = 0,$ and we have
	\begin{equation*}
		W_\psi\left(B^n_s-B^n_t, B_s-B_t\right) = \| B_s-B_t\|_\psi = \sqrt{s-t}\, \|G\|_{\psi} \leq \sqrt{h}\, \|G\|_{\psi}.
	\end{equation*}
	
	Let us assume that $\ut<\us$ and let us write
	\begin{multline}\label{eq:yep}
		W_\psi\left(B^n_s-B^n_t, B_s-B_t\right) = W_\psi\left(B^n_{\us}-B^n_{\ut}, B_s-B_t\right) \\
		\leq W_\psi\left(B^n_{\us}-B^n_{\ut}, B_{\us}-B_{\ut}\right) + W_\psi \left(B_{\us}-B_{\ut},B_s-B_t\right).
	\end{multline}
	Let us treat each term separately. For the first one, Rio's result gives
	\begin{equation*}
		W_\psi\left( \frac{1}{\sqrt{n_s-n_t}}\, \sum_{k=n_t+1}^{n_s} \xi_k , G\right) \leq C\, \left(n_s-n_t\right)^{-1/2},
	\end{equation*}
	and multiplying by $\sqrt{n_s-n_t} \sqrt{h}$, we get, since $\sqrt{ h(n_s-n_t) } G$ is equal to $ B_{\us}-B_{\ut}$ in distribution,
	\begin{equation*}
		W_\psi\left(B^n_s-B^n_t, B_{\us}-B_{\ut}\right) \leq C\, \sqrt{h}.
	\end{equation*}
	Let us deal with the second term of \eqref{eq:yep}. Let $\beta(s,t) := \min(s-t,\us-\ut)$. Then 
	\begin{align*}
		W_\psi \left(B_{\us}-B_{\ut},B_s-B_t\right) & = W_\psi\left(\m N(0,\us-\ut),\m N(0,s-t)\right) \\
		& = W_\psi \left( \m N(0,\beta(s,t)),\m N(0,\beta(s,t)) * \m N(0, |s-t-(\us-\ut)|)\right) \\
		& \leq W_\psi \left(0, \m N(0, |s-t-(\us-\ut)|) \right) \\
		& = \sqrt{|s-t-(\us-\ut)|}\, \|G\|_{\psi}.
	\end{align*}
	But $|s-t-(\us-\ut)| \leq h,$ and this concludes the proof.
\end{proof}

Let us finish with a simple consequence of this result that we will use in the sequel. 

\begin{cor}\label{en:Hrate}
	Let $0<\ep\leq 1$ and let $g:\rset\fl\rset$ be an $\ep$-H\"older continuous function. Then there exists a $C>0$ depending on T such that, for all $x\in\rset$ and all $0\leq t\leq s\leq T$,
	\begin{equation*} 
		\left| \e\left[g\left(B^{n,t,x}_s\right)\right]-\e\left[g\left(B^{t,x}_s\right)\right]\right| \leq C\, \|g\|_{\ep}\, n^{-\ep/2},
	\end{equation*}
	and, setting $\delta(t,s):=\max\left(s-t,\us-\ut\right)$,
	\begin{equation*}
		\left| \e\left[g\left(B^{n,t,x}_s\right)\left(B^{n,t,x}_s-x\right)\right]-\e\left[g\left(B^{t,x}_s\right)\left(B^{t,x}_s-x\right)\right]\right| \leq C\, \|g\|_{\ep}\,\delta(t,s)^{1/2}\, n^{-\ep/2}.
	\end{equation*}
\end{cor}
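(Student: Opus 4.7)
My plan for the first inequality is to combine Hölder continuity with the $W_1$-bound in \eqref{eq:Wr}. For any coupling $(X,Y)$ of $B^{n,t,x}_s$ and $B^{t,x}_s$ one has $|\e[g(X)]-\e[g(Y)]| \leq \|g\|_\ep\,\e[|X-Y|^\ep]$; by concavity of $r\mapsto r^\ep$ on $[0,\infty)$, Jensen's inequality gives $\e[|X-Y|^\ep] \leq (\e[|X-Y|])^\ep$. Taking the infimum over couplings yields $\|g\|_\ep W_1(X,Y)^\ep \leq \|g\|_\ep(c_1\sqrt{T/n})^\ep$, which provides the required bound.

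For the second inequality, write $X := B^{n,t,x}_s$ and $Y := B^{t,x}_s$. The starting point is the algebraic identity
\begin{equation*}
g(X)(X-x) - g(Y)(Y-x) = (g(X)-g(Y))(X-x) + (g(Y)-g(x))(X-Y),
\end{equation*}
where in the second term I exploit that $\e[X-Y]=0$ (both $X$ and $Y$ have mean $x$) to subtract the harmless term $g(x)(X-Y)$. On an optimal Wasserstein coupling, I would estimate each contribution via Hölder's inequality with conjugate exponents $p=2/\ep$ and $q=2/(2-\ep)$. The first piece is bounded by $\|g\|_\ep(\e[|X-Y|^2])^{\ep/2}(\e[|X-x|^q])^{1/q}$, where the first factor is controlled by \eqref{eq:Wr} and the second by standard moment bounds on the scaled random walk increment (e.g.\ Khintchine), yielding $\leq C\,\|g\|_\ep\, n^{-\ep/2}\,\delta(t,s)^{1/2}$. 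The symmetric estimate for the second piece instead gives $\leq C\,\|g\|_\ep\,\delta(t,s)^{\ep/2}\,n^{-1/2}$.

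The delicate point will be reconciling this second bound with the target $\delta^{1/2} n^{-\ep/2}$, since the two are only comparable when $n\delta(t,s)$ is bounded below. I split into two cases. If $\us > \ut$, then $\us-\ut \geq h$, hence $n\delta(t,s) \geq T$; writing $\delta^{\ep/2}n^{-1/2}=(n\delta)^{(\ep-1)/2}\,\delta^{1/2}\,n^{-\ep/2}$ and using $(n\delta)^{(\ep-1)/2}\leq T^{(\ep-1)/2}$ (the exponent is non-positive) absorbs the discrepancy into a constant depending on $T$ and $\ep$. If instead $\us = \ut$, then $X = x$ and only one expectation survives; writing $|\e[g(Y)(Y-x)]| = |\e[(g(Y)-g(x))(Y-x)]| \leq \|g\|_\ep\,\e[|Y-x|^{1+\ep}] \leq C\,\|g\|_\ep\,(s-t)^{(1+\ep)/2}$, and exploiting $s-t<h=T/n$ to factor $(s-t)^{(1+\ep)/2}=(s-t)^{1/2}(s-t)^{\ep/2}\leq \delta^{1/2}(T/n)^{\ep/2}$, gives the claim. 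The main obstacle is therefore not the algebraic decomposition but this case analysis and the trading of moments against the $n\delta$-ratio.
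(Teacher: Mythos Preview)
Your proof is correct. The first inequality and the degenerate case $\us=\ut$ of the second are handled exactly as in the paper. For the main case $\ut<\us$ the routes diverge slightly. The paper packages the quantity as $h(y):=(g(x+y)-g(x))y$ and, via the elementary inequality $|y-z|^{1-\ep}\leq |y|^{1-\ep}+|z|^{1-\ep}$ together with Young's inequality, upgrades the raw decomposition to the \emph{symmetric} pointwise bound
\[
|h(y)-h(z)| \leq 2\,\|g\|_\ep\,|y-z|^{\ep}\,(|y|+|z|).
\]
After H\"older with exponents $p=2/\ep$ and $q=2/(2-\ep)$ this produces the target $\delta(t,s)^{1/2}\,n^{-\ep/2}$ directly, since the $|X-Y|$ factor always carries the exponent $\ep$ while the linear factor $|X|+|Y|$ sits in $\lp^q$ with $q\leq 2$. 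Your asymmetric decomposition instead yields one term of the right shape $\delta^{1/2}n^{-\ep/2}$ and a second term $\delta^{\ep/2}n^{-1/2}$ with the exponents swapped; you then convert the latter via $n\delta(t,s)\geq T$. Both arguments are valid: yours avoids the Young-inequality symmetrization at the cost of the exponent-trading step, while the paper's pointwise symmetrization makes the $\ut<\us$ case a one-shot computation.
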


\begin{proof}
	Let $x\in\rset$ and $0\leq t\leq s\leq T$. 
	For any coupling $(X,Y)$ of $B^{n,t,x}_s$ and $B^{t,x}_s$, using Hölder's inequality when $0<\ep<1$,
	\begin{equation*}
		W_1\left(g\left(B^{n,t,x}_s\right),g\left(B^{t,x}_s\right)\right) \leq \e\left[\left| g\left(X\right)-g\left(Y\right)\right|\right] 
		\leq \|g\|_\ep\, \e\left[\left| X-Y\right|^{\ep}\right] \leq \|g\|_\ep\, \e\left[\left| X-Y\right|\right]^{\ep}.
	\end{equation*}
	Thus, we have, by~\eqref{eq:Wr} for $r=1$,
	\begin{equation*}
		W_1\left(g\left(B^{n,t,x}_s\right),g\left(B^{t,x}_s\right)\right) \leq \|g\|_\ep\, W_1\left(B^{n,t,x}_s,B^{t,x}_s\right)^{\ep} \leq  \|g\|_\ep\, c_1^{\ep}\, \left(\tfrac{T}{n}\right)^{\ep/2}.
	\end{equation*}
	 Choosing $f(x)=x$ in~\eqref{eq:KR}, this implies the first result.
	
	Let us prove the second assertion. We start by observing that, since $B_s-B_t$ and $B^n_s-B^n_t$ are centered random variables, we have, setting $h(y):=(g(x+y)-g(x))y$,
	\begin{align*}
		\e\left[g\left(B^{t,x}_s\right)\left(B^{t,x}_s-x\right)\right] & = \e\left[\left(g\left(x+B_s-B_t\right)-g(x)\right)(B_s-B_t)\right] = \e\left[h(B_s-B_t)\right], \\
		\e\left[g\left(B^{n,t,x}_s\right)\left(B^{n,t,x}_s-x\right)\right] & = \e\left[\left(g\left(x+B^n_s-B^n_t\right)-g(x)\right)\left(B^n_s-B^n_t\right)\right] = \e\left[h(B^n_s-B^{n}_t)\right].
	\end{align*}
	Let us remark that, for any real numbers $y$ and $z$, $|h(y)|\leq \|g\|_\ep\, |y|^{1+\ep}$, and using the fact that $|y-z|^{1-\ep} \leq |y|^{1-\ep}+|z|^{1-\ep}$,
	\begin{align*}
			|h(y)-h(z)| & \leq |g(x+y)-g(x)|\, |y-z| + |(g(x+y)-g(x))-(g(x+z)-g(x))| \, |z| \\
			& \leq \|g\|_\ep \left(|y-z|\, |y|^\ep + |y-z|^{\ep}\, |z| \right) \\
            &\leq \|g\|_\ep \left(|y-z|^{\ep}|y-z|^{1-\ep}\, |y|^\ep +
              |y-z|^{\ep}\, |z| \right)\\
            &\le \|g\|_\ep |y-z|^{\ep}\left(|y|+|y|^\ep\, |z|^{1-\ep}+ |z|\right).
	\end{align*}
	Young's inequality, $|y|^\ep\, |z|^{1-\ep} \leq \ep\,|y|+(1-\ep)\,|z| \leq |y|+|z|$, leads to
	\begin{equation}\label{eq:regh}
		|h(y)-h(z)| \leq 2\, \|g\|_\ep\,|y-z|^{\ep}\left(|y|+|z|\right).
	\end{equation}
	In the case where $\us=\ut$ we have
	\begin{align*}
		W_1\left(h(B^n_s-B^n_t),h(B_s-B_t)\right) & \leq \e\left[|h(B_s-B_t)|\right] \leq \|g\|_\ep\, \e\left[|B_t-B_s|^{(1+\ep)}\right] \\
		& \leq \|g\|_\ep \, (s-t)^{(1+\ep)/2} \, \e\left[|G|^{(1+\ep)}\right],
	\end{align*}
	where $ \text{law}(G) =\m N(0,1)$. Since $\us=\ut$ implies $(s-t)^{1/2} = \delta(t,s)^{1/2}$ and $(s-t)^{\ep/2} \leq h^{\ep/2},$  we have
	\begin{equation*}
		W_1\left(h(B^n_s-B^n_t),h(B_s-B_t)\right) \leq \|g\|_\ep\,\delta(t,s)^{1/2}\, (\tfrac{T}{n})^{\ep/2}
	\end{equation*}
	using the fact that $\e\left[|G|^{(1+\ep)}\right]\leq 1$.
	
	Let us turn to the case $\ut<\us$. For any coupling $(X,Y)$ of $B^n_s-B^n_t$ and $B_s-B_t$, using~\eqref{en: L-r-Wasserstein} and \eqref{eq:regh},
	\begin{align*}
		W_1\left(h(B^n_s-B^n_t),h(B_s-B_t)\right) & \leq \e\left[|h(X)-h(Y)|\right] \\
		 & \leq 2\,\|g\|_\ep\, \e\left[|X-Y|^{\ep}\, (|X|+|Y|)\right],
	\end{align*}
	and, by Hölder's inequality with $p=2/\ep$ and $q=2/(2-\ep)$,
	\begin{align*}
		W_1\left(h(B^n_s-B^n_t),h(B_s-B_t)\right) &\leq 2\,\|g\|_\ep\,  \e\left[|X-Y|^2\right]^{\ep/2}\, 
		 \,  \big ( \e [|X|^{2/(2-\ep)}]^{1-\ep/2}   + \e  [|Y|^{2/(2-\ep)}]^{1-\ep/2} \big) 
		 \\
		&\leq 2\, \|g\|_\ep \, \e\left[|X-Y|^2\right]^{\ep/2}  \left( (\us-\ut)^{1/2} +  (s-t)^{1/2}\right).
	\end{align*}
    From \eqref{en: L-r-Wasserstein} it follows that
	\begin{align*}
		W_1\left(h(B^n_s-B^n_t),h(B_s-B_t)\right) &\leq 2\, \|g\|_\ep\, W_2\left(B^n_s-B^n_t,B_s-B_t\right)^{\ep}\, \left((\us-\ut)^{1/2} + (s-t)^{1/2}\right) \\
      &\leq   4\, \, c_2^\ep\, \|g\|_\ep\, \left(\frac{T}{n}\right)^{\ep/2}\,\delta(t,s)^{1/2}, 
	\end{align*}
	where we have used~\eqref{eq:Wr} for $r=2$. 
	
	Thus, for $0\leq t\leq s\leq T$, 
	\begin{equation*}
		W_1\left(h(B^n_s-B^n_t),h(B_s-B_t)\right) \leq C\, \|g\|_\ep\, n^{-\ep/2}\,\delta(t,s)^{1/2},
	\end{equation*}
	and the result follows as before by choosing $f(x)=x$ in~\eqref{eq:KR}.
\end{proof}


\section{Regularity results on  \texorpdfstring{$u$, $U^n,$ $\nabla u$}{u} and  \texorpdfstring{$\Delta^n$}{D}} 
\label{sec:regularity_results_on_u_u_n_nabla_u_and_delta_n}
Let us start by  known regularity properties of the function $u$ that follow from classical a priori estimates for BSDEs.

\begin{lemme}
	\label{en:regu}
	Under Assumption \ref{H:A1} there exists a constant  $C>0$ depending on $(T,\ep,f,g)$ such that, for all $(t,x)\in[0,T]\times\rset$,
	\begin{equation*}
		|u(t,x)| \leq C\, (1+|x|)^\ep, \quad \qquad \| u(t,\cdot) \|_\ep \leq
        C, \quad \qquad \| u(\cdot,x) \|_{\ep/2} \leq C\, (1+|x|)^\ep.
	\end{equation*}
  \end{lemme}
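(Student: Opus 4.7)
All three estimates are consequences of classical $\lp^2$ a priori and stability results for the Lipschitz-$(y,z)$ BSDE \eqref{eq:mainBSDE}, together with the $\ep$-Hölder assumptions on $g$ and on $f(\cdot,\cdot,y,z)$.

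For the pointwise growth bound, I would exploit $|g(x)|\leq|g(0)|+\|g\|_\ep|x|^\ep$ and $|f(r,x,0,0)|\leq K_f+\|f_x\|_\ep|x|^\ep$ together with the standard $\lp^2$ a priori estimate
$$\e\sup_{s\in[t,T]}|Y^{t,x}_s|^2+\e\int_t^T|Z^{t,x}_s|^2\,ds\leq C\,\e\!\left[|g(B^{t,x}_T)|^2+\int_t^T|f(r,B^{t,x}_r,0,0)|^2\,dr\right].$$
Since Gaussian moments yield $\sup_{r\in[t,T]}\e|B^{t,x}_r|^{2\ep}\leq C(1+|x|)^{2\ep}$, the right-hand side is dominated by $C(1+|x|)^{2\ep}$, which gives $|u(t,x)|=|Y^{t,x}_t|\leq C(1+|x|)^\ep$ and, in particular, the a priori bounds $\sup_{r\in[t,T]}\e|Y^{t,x}_r|^2+\e\int_t^T|Z^{t,x}_r|^2\,dr\leq C(1+|x|)^{2\ep}$ that will be used in the time-regularity step.

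For Hölder regularity in space I would apply BSDE stability to $(Y^{t,x},Z^{t,x})$ and $(Y^{t,x'},Z^{t,x'})$: both solve BSDEs with the same Lipschitz-$(y,z)$ dependence, and the pathwise identity $B^{t,x}_r-B^{t,x'}_r\equiv x-x'$ combined with the $\ep$-Hölder property of $g$ and of $f$ in the space variable bounds the terminal and driver discrepancies by $\|g\|_\ep|x-x'|^\ep$ and $\|f_x\|_\ep|x-x'|^\ep$ respectively. The stability inequality then produces $|u(t,x)-u(t,x')|\leq C(\|g\|_\ep+\|f_x\|_\ep T)|x-x'|^\ep$, i.e.\ $\|u(t,\cdot)\|_\ep\leq C$.

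For the regularity in time, the Markov property gives $Y^{t,x}_{t'}=u(t',B^{t,x}_{t'})$ a.s., so taking expectation in \eqref{eq:mainBSDE} between $t$ and $t'$ (with $t<t'$) yields
$$u(t,x)-u(t',x)=\e[u(t',B^{t,x}_{t'})-u(t',x)]+\e\int_t^{t'}f(r,B^{t,x}_r,Y^{t,x}_r,Z^{t,x}_r)\,dr.$$
The first term is controlled by $\|u(t',\cdot)\|_\ep\,\e|B_{t'}-B_t|^\ep\leq C(t'-t)^{\ep/2}$ thanks to the space-regularity just proved. For the second term, using $|f(r,x,y,z)|\leq K_f+\|f_x\|_\ep|x|^\ep+\|f\|_\lip(|y|+|z|)$ together with the $\lp^2$ moments from the first step and Cauchy-Schwarz on the $Z$-contribution, one obtains a bound of the form $C(1+|x|)^\ep(t'-t)^{1/2}$. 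The main (mild) subtlety of the proof lies here: Cauchy-Schwarz naturally produces the exponent $1/2$ in $(t'-t)$, and only the inequality $(t'-t)^{1/2}\leq T^{(1-\ep)/2}(t'-t)^{\ep/2}$ (valid since $\ep\leq 1$) converts this into the desired $(t'-t)^{\ep/2}$ bound and explains why the time-Hölder exponent is exactly $\ep/2$.
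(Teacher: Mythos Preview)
Your proposal is correct and follows essentially the same approach as the paper: classical $\lp^2$ a priori estimates for the growth bound, BSDE stability for the space Hölder regularity, and for the time regularity a decomposition into a ``driver integral over $[t,t']$'' piece (handled by Cauchy--Schwarz and the a priori bounds) plus a term comparing the solution at time $t'$ with $u(t',x)$. The only cosmetic difference is that you bound this last term via the Markov identity $Y^{t,x}_{t'}=u(t',B^{t,x}_{t'})$ together with the space Hölder estimate you have just proved, whereas the paper writes it as $\e[Y^{r,x}_t-Y^{t,x}_t]$ and invokes BSDE stability directly; since $Y^{r,x}_t=u(t,B^{r,x}_t)$ these two arguments are the same in substance.
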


  \begin{proof}
    The first two results follow directly from classical a priori estimates for BSDEs, see e.g. \cite[Proposition 2.1]{elkaroui}. The last one ensues from the following upper bound: for any real $x$ and for $0\leq r\leq t\leq T$,
    \begin{align*}
      |u(r,x)-u(t,x)| &  = |\e[Y^{r,x}_r] -\e[Y^{t,x}_t] | \\
	  & \le |\e[ Y^{r,x}_r-Y^{r,x}_t]|+ |\e[Y^{r,x}_t-Y^{t,x}_t]|\\
      & \le \int_r^t\e[ |f(s,B^{r,x}_s,Y^{r,x}_s,Z^{r,x}_s)| ]\, ds +
        \e[|Y^{r,x}_t-Y^{t,x}_t|]\\
      & \le C \int_r^t \e[ 1+|B^{r,x}_s|^{\varepsilon}+|Y^{r,x}_s|+|Z^{r,x}_s|]\, ds +
        \e[|Y^{r,x}_t-Y^{t,x}_t|].
    \end{align*}
    Since the norm in $\mathcal{S}^2 \times \mathcal{H}^2$ of $(Y^{r,x},Z^{r,x})$ is of order $(1+|x|)^\ep$,
    we use Cauchy-Schwarz inequality to bound the first term and a priori
    estimates enable  (similarly as in the proof of \cite[Proposition 4.1]{elkaroui}) to bound the second term.
  \end{proof}

 Next we extend \cite[Theorem 3.2]{Zha05} to the case where $f(t,\cdot,y,z)$ is Hölder continuous.

\begin{lemme}
	\label{en:regnabus}
	Recall  the notation \eqref{eq:defF} and let Assumption \ref{H:A1} hold. 
	
	\begin{enumerate}[{\rm (a)}]
		\item \label{ma_zhang_formula} 
		The function $u$ belongs to $\m C^{0,1}([0,T[\times\rset)$ and, for all $(t,x) \in [0,T[
  \times \rset$, we have,
  \begin{align}\label{eqZ}
    Z^{t,x}_s = \nabla u(s, B^{t,x}_s)  \quad \text{for a.e.} \,\,
    (s,\omega)\in [t,T[\times \Omega, 
  \end{align}
  as well as \eqref{eq:nabla-u} i.e.
  \begin{equation*}
    \nabla u(t,x)=\e\left(g(B^{t,x}_T)\frac{B_T-B_t}{T-t}\right)+\e\left(\int_t^T
      F(s,B^{t,x}_s)\frac{B_s-B_t}{s-t}ds\right).
  \end{equation*}
  \item Moreover, there exists a constant  $C>0$ depending on $(T,\ep,f,g)$ such that, 
  \begin{enumerate}[{\rm (i)}] 
  \item \label{nabla u-diff} $ \|\nabla u(t, \cdot) \|_\ep \le\frac{C}{\sqrt{T-t}}$  for all $t \in [0,T[,$
  \item \label{nabla u-bound} $|\nabla u(t,x)|\le \frac{C}{(T-t)^{(1-\ep)/2}}$ for all $(t,x) \in [0,T[ \times \rset.$ 
  \end{enumerate}
\end{enumerate}
\end{lemme}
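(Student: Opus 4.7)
The plan is to reduce, via mollification in the space variable, to the Lipschitz setting covered by J.~Zhang, derive estimates that are independent of the approximation parameter, and then pass to the limit. Let $\rho_\delta$ be a standard mollifier on $\rset$ and set $g^\delta:=g*\rho_\delta$ and $f^\delta(t,\cdot,y,z):=f(t,\cdot,y,z)*\rho_\delta$. Mollification preserves the $\ep$-H\"older constants in $x$ and the Lipschitz constants in $(y,z)$, while rendering $g^\delta$ and $f^\delta(t,\cdot,y,z)$ smooth (hence Lipschitz) in $x$. Denoting by $(u^\delta,\nabla u^\delta)$ the Feynman–Kac pair for the smoothed BSDE, \cite[Theorem 3.2]{Zha05} gives the conclusions of part~(a) at level $\delta$, with $F^\delta(s,x):=f^\delta(s,x,u^\delta(s,x),\nabla u^\delta(s,x))$ playing the role of $F$. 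Lemma~\ref{en:regu} applied to $(u^\delta,f^\delta,g^\delta)$ yields $\|u^\delta(s,\cdot)\|_\ep\le C$ with $C$ uniform in $\delta$.

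For~(i), I would apply the representation formula for $\nabla u^\delta$ at two starting points $x,x'$ coupled through the same driving Brownian motion, so that $B^{t,x}_s-B^{t,x'}_s=x-x'$ pathwise. Combining the $\ep$-H\"older estimate on $g^\delta$, the Lipschitz property of $f^\delta$ in $(y,z)$, the uniform $\ep$-H\"older bound on $u^\delta(s,\cdot)$, and the elementary identity $\e[|B_r-B_t|^{1+\ep}]/(r-t)\le C(r-t)^{(\ep-1)/2}$, one arrives at a singular Volterra-type inequality
\begin{equation*}
\|\nabla u^\delta(t,\cdot)\|_\ep \le \frac{C}{\sqrt{T-t}} + C\,\|f\|_{\lip}\int_t^T \frac{\|\nabla u^\delta(s,\cdot)\|_\ep}{\sqrt{s-t}}\, ds.
\end{equation*}
A single iteration of this inequality together with the classical identity $\int_t^T (s-t)^{-1/2}(T-s)^{-1/2}\,ds=\pi$ converts the singular kernel into a bounded one, after which a standard backward Gronwall argument delivers a bound $\|\nabla u^\delta(t,\cdot)\|_\ep \le C/\sqrt{T-t}$ independent of $\delta$.

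For~(ii), I would plug the bound from~(i) back into the representation formula, centering with $g^\delta(x)$ and $F^\delta(s,x)$, which is legitimate because $\e[B_r-B_t]=0$. The $\ep$-H\"older controls on $g^\delta$ and on $F^\delta(s,\cdot)$—the latter now controlled thanks to~(i)—produce
\begin{equation*}
|\nabla u^\delta(t,x)| \le \frac{C}{(T-t)^{(1-\ep)/2}} + C\int_t^T (s-t)^{(\ep-1)/2}\bigl(1+(T-s)^{-1/2}\bigr)\,ds,
\end{equation*}
and a beta-function evaluation bounds the right-hand side by $C/(T-t)^{(1-\ep)/2}$, uniformly in $\delta$. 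To pass to the limit $\delta\to 0$, the uniform spatial H\"older bound together with the pointwise estimate makes $\{\nabla u^\delta\}_\delta$ locally equicontinuous and locally uniformly bounded on $[0,T-\eta]\times K$ for each $\eta>0$ and compact $K\subset\rset$. Arzel\`a–Ascoli extracts a subsequence along which $u^\delta\to u$ and $\nabla u^\delta\to v$ locally uniformly on $[0,T[\times\rset$, so that $u\in\m C^{0,1}([0,T[\times\rset)$ with $\nabla u=v$ satisfying~(i) and~(ii). Standard BSDE stability gives $Z^{\delta,t,x}\to Z^{t,x}$ in $\lp^2$, which combined with $Z^{\delta,t,x}_s=\nabla u^\delta(s,B^{t,x}_s)$ forces $Z^{t,x}_s=\nabla u(s,B^{t,x}_s)$ a.e.; dominated convergence in the representation formula for $\nabla u^\delta$, controlled by the uniform bounds from~(i)--(ii), finally yields~\eqref{eq:nabla-u} for $u$.

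The hard part is closing the bootstrap in~(i): $F^\delta$ contains the very object $\nabla u^\delta$ whose H\"older norm we seek to control, and the kernel $(s-t)^{-1/2}$ is singular so the classical Gronwall inequality does not apply directly. The key is that a single iteration produces the regular kernel $(s-t)^{-1/2}(T-s)^{-1/2}$ whose integral equals $\pi$ independently of $T$ and $t$, which is precisely what allows a conventional Gronwall estimate to close the loop.
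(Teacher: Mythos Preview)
Your strategy is essentially the paper's: regularize so that the generator is Lipschitz in $x$, derive (i) from the Ma--Zhang representation via a Volterra-type Gronwall, feed (i) back into the representation to get (ii), then pass to the limit. The paper regularizes by inf-convolution $f^\eta(t,x,y,z)=\inf_p\{f(t,p,y,z)+\eta|x-p|\}$ rather than mollification, and leaves $g$ untouched; either choice works and the Step~1 computations coincide with what you sketch.

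Two points deserve care. First, running the Volterra--Gronwall directly on $s\mapsto\|\nabla u^\delta(s,\cdot)\|_\ep$ presupposes that this function is finite and lies in $L^1([0,T[)$, which is not immediate from Zhang's a priori bound $|\nabla u^\delta(t,x)|\le C_\delta(1+|x|)/\sqrt{T-t}$. The paper avoids this by running the inequality on the $L^2$ norm of $\nabla u(r,B^{t,x}_r)-\nabla u(r,B^{t,y}_r)$ for \emph{fixed} $x,y$ (which is integrable by the a priori bound) and only specializing to $r=t$ at the end. Second, your Arzel\`a--Ascoli argument requires equicontinuity of $\{\nabla u^\delta\}$ in \emph{time} on $[0,T-\eta]\times K$, and neither the spatial H\"older bound nor the pointwise bound provides this. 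The paper bypasses compactness altogether: it shows, with an explicit rate $\eta^{-\ep/(1-\ep)}$ coming from BSDE stability, that $\nabla u^\eta$ converges uniformly to the function $v$ defined by the right-hand side of \eqref{eq:nabla-u}, and then identifies $v=\nabla u$ by passing to the limit in $u^\eta(t,x)=u^\eta(t,0)+\int_0^x\nabla u^\eta(t,y)\,dy$ via dominated convergence. Both gaps are repairable along these lines, but as written they are not closed.
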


Consequently, for $\e_r := \e [\, \cdot\, |\m F_r]$,
\begin{align}\label{eq: nabla-u-b} 
 \nabla u(r, B^{t,x}_r)&=\e_r\left(g(B^{t,x}_T)\frac{B_T-B_r}{T-r}\right)+\e_r\left(\int_r^T
                     F(s,B^{t,x}_s)\frac{B_s-B_r}{s-r}ds\right) \,\,  \text{ a.s. for } \, r \in  [t,T[.
\end{align}

\begin{proof}[ Proof of Lemma \ref{en:regnabus}]
	The proof is divided into two steps.
	
\noindent\emph{Step 1. We assume in addition that $f$ is Lipschitz continuous w.r.t. $x$.}\quad Then according to \cite{Zha05}, we have only the second point to prove and we know that, for some constant $C$,
\begin{equation}\label{eq:nabla-u-estimate}  
|\nabla u(t,x)|\le \frac{C(1+|x|)}{\sqrt{ T-t}} \quad \text{   on }   [0,T[ \times \rset.
\end{equation}

\eqref{nabla u-diff} The representation \eqref{eq: nabla-u-b} yields to
 \begin{align} \label{L2-estimate}
	 \notag
 \| \nabla u(r, B^{t,x}_r) - \nabla u(r, B^{t,y}_r)\|_{L^2}  
 &\le \left \|    \e_r\left(\left (g(B^{t,x}_T) - g(B^{t,y}_T) \right )\frac{B_T-B_r}{T-r}\right) \right \|_{L^2}  \\
 & \quad +  \int_r^T  \left \|  \e_r\left(
                     \left(F(s,B^{t,x}_s) -F(s,B^{t,y}_s)\right) \frac{B_s-B_r}{s-r} \right)   \right \|_{L^2}   ds. 
  \end{align}
Since $g$ is $\ep$-Hölder  continuous we get
$$ \left | \e_r\left((g(B^{t,x}_T) - g(B^{t,y}_T))\frac{B_T-B_r}{T-r}\right)\right | \le \|g\|_{\ep}  |x-y|^\ep \frac{1}{\sqrt{T-r}}.$$
Similarly,  we obtain by the conditional Cauchy-Schwarz inequality the estimate
$$ \left \|  \e_r\left(
                     (F(s,B^{t,x}_s) -F(s,B^{t,y}_s)) \frac{B_s-B_r}{s-r} \right)   \right \|_{L^2} \le \left \| 
                     F(s,B^{t,x}_s) -F(s,B^{t,y}_s)   \right \|_{L^2}  \frac{1}{\sqrt{s-r}}.   $$
Using \eqref{eq:prop_f} for $f$, we have
\begin{multline*}
	| F(s,B^{t,x}_s) -F(s,B^{t,y}_s)| \\
	\leq \|f_x\|_{\ep}|x-y|^{\ep}+\|f_y\|_{\lip}| u(s,B^{t,x}_s) -u(s,B^{t,y}_s) |+\|f_z\|_{\lip}|  \nabla u(s,B^{t,x}_s)-  \nabla u(s,B^{t,y}_s)|, 
\end{multline*}
  and the Hölder continuity of $u$ stated in Lemma~\ref{en:regu} yields
 \begin{multline} \label{F-difference} 
	 | F(s,B^{t,x}_s) -F(s,B^{t,y}_s)| \\
 \le( \|f_x\|_{\ep}+  C\|f_y\|_{\lip}) |x -y |^\ep+\|f_z\|_{\lip}|  \nabla u(s,B^{t,x}_s)-  \nabla u(s,B^{t,y}_s)|.
 \end{multline}
 By combining the above estimates we conclude from \eqref{L2-estimate}  that 
\begin{align*} 
 \| \nabla u(r, B^{t,x}_r) - \nabla u(r, B^{t,y}_r)\|_{L^2}  & \le \|g\|_{\ep}  \frac{   |x-y|^\ep }{\sqrt{T-r}} + 2\, (\|f_x\|_{\ep}+ C\, \|f_y\|_{\lip})\, |x -y |^\ep \sqrt{T-r} \\
 & \quad +   \int_r^T  \left \|   \nabla u(s,B^{t,x}_s)-  \nabla u(s,B^{t,y}_s)  \right \|_{L^2} \frac{\|f_z\|_{\lip}}{\sqrt{s-r}}  ds\\
 &\le C_1  \frac{|x-y|^\ep}{\sqrt{T-r}} +  \int_r^T  \left \|   \nabla u(s,B^{t,x}_s)-  \nabla u(s,B^{t,y}_s)  \right \|_{L^2} \frac{ \|f_z\|_{\lip}}{\sqrt{s-r}}  ds 
  \end{align*}
for $C_1:= \|g\|_{\ep}  +2T (\|f_x\|_{\ep}+C\, \|f_y\|_{\lip}) .$   Because of \eqref{eq:nabla-u-estimate} we have 
\begin{equation*}
	\left \|   \nabla u(s,B^{t,x}_s)-  \nabla u(s,B^{t,y}_s)  \right \|_{L^2} \le  C_2 \frac{1+|x|+|y|}{  (T-s)^{1/2}}
\end{equation*}   
with $C_2=C_2(C,T)>0$.  Hence we may apply 
 Gronwall's lemma  (Lemma \ref{volterra_gronwall}) and get 
 \begin{align*} \| \nabla u(r, B^{t,x}_r) - \nabla u(r, B^{t,y}_r)\|_{L^2}  \le  C_1 c_0 \frac{|x-y|^\ep}{\sqrt{T-r}},  \end{align*}
for some $c_0 =c_0(T, \|f_z\|_{\lip})>0.$ Especially, for $r=t$ this implies
 \begin{align*} 
  | \nabla u(t, x) - \nabla u(t, y)|   &\le C \frac{ |x-y|^\ep}{\sqrt{T-t}}
  \end{align*}
for some  $C=C(T,\ep,f, g)>0.$  \smallskip

  \eqref{nabla u-bound} We first notice that for any $\ep$-H\"older continuous function $k$ and for all $0 \le t<s\le T$ we have 
\begin{equation} \label{holder-estimate}
\left|\e \left[ k(B^{t,x}_s)\frac{B_s-B_t}{s-t}\right]\right| = \left|\e \left[(k(B^{t,x}_s)-k(x))\frac{B_s-B_t}{s-t}\right]\right | \le  \frac{\|k\|_{\ep}}{(s-t)^{(1-\ep)/2}}.
\end{equation}
Therefore, we obtain from \eqref{eq:nabla-u}  that
 \begin{align*}
  |\nabla u(t,x)|&\le \left |\e\left(g(B^{t,x}_T)\frac{B_T-B_t}{T-t}\right) \right | +
 \left | \e\left(\int_t^T  F(s,B^{t,x}_s)\frac{B_s-B_t}{s-t}ds\right) \right |  \\
&\le  \frac{\|g\|_{\ep}}{(T-t)^{(1-\ep)/2}}  +\int_t^T  \frac{\|F(s, \cdot)\|_{\ep}}{(s-t)^{(1-\ep)/2}}ds.
\end{align*}
Using \eqref{F-difference} for $s=t$ and taking into account that $\nabla u$ satisfies   \eqref{nabla u-diff} we get
\begin{align} \label{eq:regF}
 \|F(s,\cdot)\|_\ep \le  \frac{C}{\sqrt{T-s}}
\end{align}
for some  $C=C(T,\ep,f, g)>0.$ 
This finishes the proof of the first step.

\emph{Step 2. General case.}\quad The proof relies on a regularization procedure and is postponed to appendix~\ref{A: generator Hoelder}.
\end{proof}

\begin{rem}
\label{en:Zreg}
	From now on we will always use the  continuous version of $Z^{t,x}_s$ given by $\nabla u(s,B^{t,x}_s)$.
\end{rem}

\begin{lemme}\label{lem1} For all $(t,x) \in [0,T[ \times \rset$ and for
    $n\ge n_0\in \nset^*$,  with $n_0$ defined as in Lemma \ref{en:aeDB}, we have
 \begin{enumerate}[{\rm(i)}]
 \item \label{U-n-bound}
   $ |U^n(t,x)|\le C(1+|x|)^{\ep},$  
   \item $ |\Delta^n (t,x)|\le \frac{C_n}{(T-t)^{(1-\ep)/2}},$
 \end{enumerate}
  where $C>0$ depends on $(T, \ep,f,g)$ and $C_n>0$ depends on $(T, \ep,f,g,n).$
\end{lemme}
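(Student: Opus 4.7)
The plan is to mimic the proof of Lemma~\ref{en:regnabus}, transposing the continuous-time arguments to the random walk setting.

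For (i), I would rely on standard a priori estimates for discrete BSDEs. Starting from \eqref{y-eqn}, squaring and conditioning yields the discrete It\^o identity
\begin{equation*}
\e\bigl[|Y^{n,t,x}_{(k+1)h}|^2 \bigm| \m F^n_{kh}\bigr] = |Y^{n,t,x}_{kh}|^2 - 2hY^{n,t,x}_{kh}\,f_k + h^2 f_k^2 + h\,|Z^{n,t,x}_{(k+1)h}|^2,
\end{equation*}
where $f_k$ denotes the value of $f$ at step $k$. Summing from $n_t$ to $n-1$, taking expectations, and using the growth of $f$ from \ref{H:A1} together with Young's inequality to absorb the $|Z^n|^2$-term arising from $\|f_z\|_{\lip}\,|Z^n|$ into the left-hand side, a discrete Gronwall applied to $(\e|Y^{n,t,x}_{kh}|^2)_k$ delivers $\e|Y^{n,t,x}_t|^2 \le C(1+|x|)^{2\ep}$ with $C$ depending only on $(T,\ep,f,g)$. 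Since $|U^n(t,x)| = |\e Y^{n,t,x}_t|$, part~(i) then follows by Jensen.

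For (ii) my plan is to establish a discrete Bismut-type formula in the spirit of \eqref{eq:nabla-u}. Starting from $\Delta^n(t,x) = h^{-1/2}\,\e[Y^{n,\ut,x}_{\ut+h}\,\xi'_1]$, where $\xi'_1$ denotes the first Bernoulli increment of $B^n$ after time $\ut$, and unwinding \eqref{y-eqn} between $\ut+h$ and $T$, the martingale pieces $\sqrt h\,Z^{n,\ut,x}_{\ut+jh}\xi'_j$ with $j\ge 2$ disappear under the expectation because of the orthogonality $\e[\xi'_1\xi'_j]=0$. On each surviving term, the symmetry identity $\e[\xi'_1\,\phi(\xi'_1+\cdots+\xi'_m)] = m^{-1}\,\e[\phi\cdot(\xi'_1+\cdots+\xi'_m)]$, valid for any $\phi$ symmetric in its arguments, converts the residual $\xi'_1$ into a weight $(B^n_{s}-B^n_{\ut})/(s-\ut)$. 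This yields, with $s_j := \ut+(j-1)h$ and $F^n$ defined as the obvious discrete counterpart of \eqref{eq:defF},
\begin{equation*}
\Delta^n(t,x) = \e\!\left[g(B^{n,\ut,x}_T)\,\frac{B^n_T-B^n_{\ut}}{T-\ut}\right] + \sum_{j=2}^{n-n_t} h\,\e\!\left[F^n(s_j, B^{n,\ut,x}_{s_j})\,\frac{B^n_{s_j}-B^n_{\ut}}{s_j-\ut}\right].
\end{equation*}

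From here the estimates follow the recipe of Lemma~\ref{en:regnabus}(ii). The $g$-term is handled via the centering $\e[B^n_T-B^n_{\ut}]=0$, the $\ep$-H\"older property of $g$, and the moment bound $\e|B^n_T-B^n_{\ut}|^{1+\ep}\le C(T-\ut)^{(1+\ep)/2}$ (Jensen, since $1+\ep\le 2$), giving a contribution of size $C/(T-\ut)^{(1-\ep)/2}\le C/(T-t)^{(1-\ep)/2}$. For the $f$-term, centering in the weight combined with the Lipschitz/H\"older properties of $f$ in its last three arguments splits the integrand into three pieces: an explicit $x$-contribution (of the same form as the $g$-term), a $U^n$-contribution controlled by part~(i) together with an $n$-dependent $\ep$-H\"older bound on $U^n(s,\cdot)$ obtained by iterating Remark~\ref{rem3} through \eqref{eq:mainPDEn}, and a self-referential $\Delta^n$-contribution. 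Setting $\Phi_n(s):=\sup_x|\Delta^n(s,x)|$, which is finite for every fixed $n$ by that same H\"older iteration, one arrives at a discrete Volterra inequality of the form
\begin{equation*}
\Phi_n(t) \le \frac{\widetilde C_n}{(T-t)^{(1-\ep)/2}} + C\,\|f_z\|_{\lip} \sum_{j=2}^{n-n_t} \frac{h\,\Phi_n(s_j)}{\sqrt{s_j-\ut}},
\end{equation*}
to which Lemma~\ref{volterra_gronwall} applies and yields the stated bound. The main obstacle is precisely this closure: $F^n$ depending on $\Delta^n$ itself rules out a purely pointwise argument and forces one to work through $\Phi_n$, accepting the $n$-dependence of the constant inherited from the iterated H\"older norm of $U^n$.
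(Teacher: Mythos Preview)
Your plan is sound in outline and would succeed, but it is more elaborate than the paper's argument and, as written, does not quite deliver the stated singularity in (ii).

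For (i) the paper does not redo any discrete It\^o computation: it simply applies Lemma~\ref{en:aeDB} with $(\bar g,\bar f)=(0,0)$, which already gives $\e\sup_s|Y^{n,t,x}_s|^2\le C(1+|x|)^{2\ep}$. Your route reproves a special case of that lemma.

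For (ii) your Bismut-type representation coincides with the paper's \eqref{eq:gradnu}, and the $g$-term is handled the same way. The difference lies in the $f$-term. You close through a Volterra inequality in $\Phi_n(s)=\sup_x|\Delta^n(s,x)|$ and invoke Lemma~\ref{volterra_gronwall}; but that lemma takes forcing $\alpha/\sqrt{T-t}$ and returns the same $1/\sqrt{T-t}$ singularity, so applied literally it yields only $\Phi_n(t)\le C_n/\sqrt{T-t}$, which is coarser near $t=T$ than the claimed $C_n/(T-t)^{(1-\ep)/2}$. The paper sidesteps this entirely by observing that no recursion is needed: from Lemma~\ref{en:aeDB} (with $\bar x=y$, $\bar g=g$, $\bar f=f$) one gets an $n$-\emph{independent} $\ep$-H\"older bound $|U^n(t,x)-U^n(t,y)|\le c_0|x-y|^\ep$, and then Remark~\ref{rem3} gives $|\Delta^n(t,x)-\Delta^n(t,y)|\le (c_0/\sqrt h)\,|x-y|^\ep$. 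Inserting these H\"older bounds directly into the centered $f$-difference shows the whole $f$-contribution is bounded by an $n$-dependent constant \emph{uniformly in $t$}; the singularity $(T-t)^{-(1-\ep)/2}$ comes solely from the $g$-term.

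Two side remarks. First, the $n$-dependence of $C_n$ originates from the factor $h^{-1/2}$ in the H\"older constant of $\Delta^n$, not from that of $U^n$ as you suggest. Second, your proposed ``iteration of Remark~\ref{rem3} through \eqref{eq:mainPDEn}'' for the H\"older norm of $U^n$ would pick up a factor $(1+c\sqrt h)$ at each of the $n$ steps, producing a constant of order $e^{c\sqrt{Tn}}$; this is still finite for fixed $n$ and hence suffices here, but it is far worse than the uniform bound that Lemma~\ref{en:aeDB} gives for free.
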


\begin{proof}
  The result on $U^n$ ensues from Lemma \ref{en:aeDB}, by choosing
     $\overline{f}=0$ and $\overline{g}=0$.
 Let us prove the result on $\Delta^n$.   
 By \eqref{Delta-n} and \eqref{nabla} we have that 
\begin{align*}
\Delta^n(t,x) & = [\nabla^n U^n](\ut+h,x) \\
& = \tfrac{1}{2\sqrt{h}}  ( U^n(\ut+h,x + \sqrt{h}) - U^n(\ut+h,x -\sqrt{h})) \\
& = \e \left [ U^n(\ut+h, B^{n,\ut,x}_{\ut+h}  )   \tfrac{B^{n,\ut,x}_{\ut+h} -x}{h}  \right ].
\end{align*}
We want to use \eqref{U-n}, where we realize that  
$$   \e \left [   g\left(B^{n,\ut+h , B^{n,\ut,x}_{\ut+h}}_T\right)  \tfrac{B^{n,\ut,x}_{\ut+h} -x}{h}  \right ] =   \e \left [   g\left(B^{n,t,x}_T\right)  \tfrac{B^{n,\ut,x}_{\ut+h} -x}{h}  \right ]  = \e\left[g(B^{n,t,x}_T)\frac{B^{n,t,x}_T-x}{T-\ut} \right]. $$
A similar argument can be used for the integral expression so that we get 
\begin{equation}\label{eq:gradnu}
	\Delta^n(t,x) = \e\left[g(B^{n,t,x}_T)\frac{B^{n,t,x}_T-x}{T-\ut} \right] +\e\left[\int_{\ut+h}^T f\left(\os,\Theta^{n,t,x}_s\right)\frac{B^{n,t,x}_s-x}{\us-\ut} \,ds\right].
\end{equation}
Then
\begin{align*} 
  \Delta^n(t,x)=&\, \e\left((g(B^{n,t,x}_T)-g(x))\frac{B^n_T-B^n_t}{T-\ut}\right)\notag\\
                 &+\e\left(\int_{\ut+h}^T
  (f(\os,\Theta^{n,t,x}_s)-f(\os,x,U^n(s,x),\Delta^n(s,x)))\frac{B^n_s-B^n_t}{\us-\ut}ds\right)\\
  =:\,& G+F. \notag
\end{align*}

Since $g$ is $\ep$-Hölder, $|G|$ is bounded by
$\frac{\|g\|_{\ep}}{(T-t)^{(1-\ep)/2}}$. Concerning the second term, we get, since $f$
satisfies \eqref{eq:prop_f},
\begin{align*}
  &f(\os,\Theta^{n,t,x}_s)-f(\os,x,U^n(s,x),\Delta^n(s,x))\\
  &\le \|f_x\|_{\ep}|B^n_s-B^n_t|^{\ep}+\|f_y\|_{\lip}|U^n(s,B^{n,t,x}_s)-U^n(x,s)|+\|f_z\|_{\lip}|\Delta^n(s,B^{n,t,x}_s)-\Delta^n(s,x)|.
\end{align*}
We will use  that $U^n(t,x)$ and $\Delta^n(t,x)$ are
$\ep$-Hölder continuous in $x$, i.e.
\begin{align*} 
  |U^n(t,x)-U^n(t,y)|+|\Delta^n(t,x)-\Delta^n(t,y)|\le c(h)|x-y|^{\ep},
\end{align*}
where $c(h)$ tends to infinity when $h$ tends to $0$. For $U^n$, Lemma~\ref{en:aeDB} with $(\bar x, \bar g,\bar f)=(y,g,f)$ gives
\begin{align}\label{eq7a}
  |U^n(t,x)-U^n(t,y)|\le c_0|x-y|^{\ep}
\end{align}
while for $\Delta^n$ this is an immediate consequence of 
Remark \ref{rem3} and  \eqref{eq7a} with  $c(h)={c_0+ \frac{c_0}{\sqrt{h}}}.$   Then
\begin{align*}
  |F| &\le
    \left (  \|f_x\|_{\ep}+c(h)\|f_y\|_{\lip}+c(h)\|f_z\|_{\lip}  \right ) \int_{\ut+h}^T
 \e\left(\frac{|B^n_s-B^n_t|^{1+\ep}}{\us-\ut}\right)ds\\
  &\le \left ( \|f_x\|_{\ep}+c(h)\|f_y\|_{\lip}+c(h)\|f_z\|_{\lip} \right )  \int_{\ut+h}^T \frac{1}{(\us-\ut)^{(1-\ep)/2}}ds.
\end{align*}
Since $\frac{1}{(\us-\ut)^{(1-\ep)/2}}\le \frac{1}{(s-(\ut+h))^{(1-\ep)/2}}$ for $s \in \, ]\ut+h,T]$
we get that $|F|\le C_n$.

\end{proof}

\begin{prop}\label{en:timenabu}
  Under~\ref{H:A1}, there exists a constant  $C>0$ depending on $(T,\ep,f,g)$ such that, for all $x \in \rset$,
  \begin{equation}
  	\label{2}
	|\nabla u(t,x) -\nabla u(r,x) | \le C \frac{(t-r)^{\ep/2}}{\sqrt{T-t}} \quad \text{ for all}\,\, \,\, 0\le r <t <T.
  \end{equation}  
\end{prop}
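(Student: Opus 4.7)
The plan is to exploit the Feynman--Kac representation \eqref{eq:nabla-u} rewritten in terms of the Brownian heat semigroup $P_\tau h(x):=\e[h(x+\sqrt{\tau}G)]$ and its spatial derivative $\nabla P_\tau h(x)=\e[h(x+\sqrt{\tau}G)\,G/\sqrt{\tau}]$ (valid whenever $h$ has polynomial growth, by integration by parts against the Gaussian density). In this notation, for any $0\le r<t<T$,
\[
\nabla u(t,x)=\nabla P_{T-t}g(x)+\int_t^T \nabla P_{s-t}F(s,\cdot)(x)\,ds,
\]
and likewise with $t$ replaced by $r$. Splitting $\int_r^T=\int_r^t+\int_t^T$ and subtracting, I write $\nabla u(t,x)-\nabla u(r,x)=A+B-C$, where
\begin{align*}
A&:=\nabla P_{T-t}g(x)-\nabla P_{T-r}g(x),\\
B&:=\int_t^T\bigl(\nabla P_{s-t}-\nabla P_{s-r}\bigr)F(s,\cdot)(x)\,ds,\\
C&:=\int_r^t \nabla P_{s-r}F(s,\cdot)(x)\,ds.
\end{align*}

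The crucial observation is the semigroup identity $P_{s-r}=P_{s-t}\circ P_{t-r}$ for $s\ge t\ge r$. Applied with $s=T$ and for each $s\in[t,T]$, it gives $\nabla P_{T-r}g=\nabla[P_{T-t}(P_{t-r}g)]$ and $\nabla P_{s-r}F(s,\cdot)=\nabla[P_{s-t}(P_{t-r}F(s,\cdot))]$. Hence $A=\nabla P_{T-t}(g-P_{t-r}g)(x)$, with an analogous representation for the integrand of $B$. Combining the elementary H\"older bound $\|P_{t-r}h-h\|_\infty\le\|h\|_\ep(t-r)^{\ep/2}\,\e[|G|^\ep]$ with the trivial gradient estimate $|\nabla P_\sigma \phi(x)|\le \|\phi\|_\infty\,\e|G|/\sqrt{\sigma}$ valid for bounded $\phi$, I obtain
\[
|A|\le\frac{C\|g\|_\ep(t-r)^{\ep/2}}{\sqrt{T-t}},\qquad
|B|\le C(t-r)^{\ep/2}\int_t^T\frac{\|F(s,\cdot)\|_\ep}{\sqrt{s-t}}\,ds.
\]
This is where the target scaling $(t-r)^{\ep/2}/\sqrt{T-t}$ emerges: the increment $t-r$ is transferred from the heat kernel to a bounded perturbation $g-P_{t-r}g$ of sup-norm $O((t-r)^{\ep/2})$, and the remaining $1/\sqrt{T-t}$ singularity is produced by a single application of $\nabla P_{T-t}$.

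To finish, I inject the estimate $\|F(s,\cdot)\|_\ep\le C/\sqrt{T-s}$ from \eqref{eq:regF}. For $B$, $\int_t^T ds/\sqrt{(s-t)(T-s)}=\pi$, hence $|B|\le C(t-r)^{\ep/2}\le C\sqrt{T}(t-r)^{\ep/2}/\sqrt{T-t}$. For $C$, I use the standard estimate $|\nabla P_\tau k(x)|\le \|k\|_\ep\,\tau^{-(1-\ep)/2}$ recalled in \eqref{holder-estimate}, so the integrand is at most $C(s-r)^{-(1-\ep)/2}/\sqrt{T-s}$; bounding $\sqrt{T-s}\ge\sqrt{T-t}$ for $s\in[r,t]$ and integrating in $s-r$ gives $|C|\le C(t-r)^{(1+\ep)/2}/\sqrt{T-t}\le C\sqrt{T}(t-r)^{\ep/2}/\sqrt{T-t}$. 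Summing the three contributions yields the claim, with a constant depending on $(T,\ep,f,g)$ through $\|g\|_\ep$ and the constant in \eqref{eq:regF}.

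The main obstacle is choosing the right decomposition: a naive approach, such as differentiating $\tau\mapsto \nabla P_\tau g$ in $\tau$, produces third-order spatial derivatives of $g$ whose singularity in $\tau$ is too strong to reintegrate at the desired rate (one easily recovers $(t-r)^{(1-\ep)/2}/(T-t)^{1-\ep}$, which has the wrong powers). The semigroup factorization $P_{s-r}=P_{s-t}\circ P_{t-r}$ is the key device that trades the full H\"older exponent $\ep/2$ in time for only half a unit of singularity $1/\sqrt{T-t}$ in space.
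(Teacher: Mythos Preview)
Your proof is correct. The overall structure matches the paper's: both start from the representation~\eqref{eq:nabla-u}, decompose the difference into a terminal piece, an integral over $[t,T]$, and an integral over $[r,t]$, and both feed in the bound $\|F(s,\cdot)\|_\ep\le C(T-s)^{-1/2}$ from~\eqref{eq:regF}.

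The genuine difference lies in how the time increment $\nabla P_{s-t}h(x)-\nabla P_{s-r}h(x)$ is controlled. The paper stays probabilistic: it writes this as $\e[h(B_s^{t,x})H(t,s)]-\e[h(B_s^{r,x})H(r,s)]$ with $H(t,s)=(B_s-B_t)/(s-t)$, splits into $\e[(h(B_s^{t,x})-h(B_s^{r,x}))H(t,s)]$ plus $\e[(h(B_s^{r,x})-h(x))(H(t,s)-H(r,s))]$, and uses the explicit computation $\e|H(t,s)-H(r,s)|^2=(t-r)/((s-r)(s-t))$ together with Cauchy--Schwarz. You instead use the analytic semigroup factorisation $P_{s-r}=P_{s-t}\circ P_{t-r}$, rewriting the difference as $\nabla P_{s-t}(h-P_{t-r}h)$ and exploiting that $h-P_{t-r}h$ is uniformly bounded by $\|h\|_\ep(t-r)^{\ep/2}\e|G|^\ep$. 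Both routes deliver the same bound $2\|h\|_\ep(t-r)^{\ep/2}(s-t)^{-1/2}$; yours is the standard PDE device and arguably cleaner here, while the paper's coupling argument avoids introducing the semigroup language and stays closer to the probabilistic machinery used elsewhere in the article (e.g.\ Corollary~\ref{en:Hrate}).
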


\begin{proof} From Lemma~\ref{en:regnabus}, we know that, for $(t,x)\in[0,T[\times\rset$,
	\begin{equation}
		\label{eq:withH}
		\nabla u(t,x) = \e\left[ g\left(B_T^{t,x}\right) H(t,T)  + \int_t^T F\left(s,B_s^{t,x}\right)H(t,s)\, ds\right],
	\end{equation}
	where we have set 
	$$
	H(t,s):= \frac{B_s-B_t}{s-t},\quad \text{ for } \, 0\le t<s\le T.
	$$
It holds
$\e [H(t,s)]  =0 $  and  $\|H(t,s) \|_{\lp^2} = \frac{1}{\sqrt{s-t}}$.  We also have, for $0\leq r\leq t<s$,
 \begin{align*}
\e|H(r,s)-H(t,s)|^2 &=\frac{1}{s-r} - \frac{2}{(s-t)(s-r)}
                      \e[(B_s-B_r)(B_s-B_t)]+ \frac{1}{s-t} \\
&=\frac{1}{s-t} - \frac{1}{s-r} =\frac{t-r}{(s-r)(s-t)}.
 \end{align*}
Let us observe that, for $0\le r<t<s \le T$ and any $\ep$-H\"older continuous function $h$, it holds
\begin{equation*}
	\left| \e \left[h(B_s^{t,x}) H(t,s)\right] - \e\left[h(B_s^{r,x}) H(r,s)\right] \right| \le  2\,    \|  h \|_\ep   \frac{(t-r)^{\ep/2}}{(s-t)^{1/2}}.
\end{equation*}
Indeed, we have  
 \begin{multline*}
\left| \e \left[h(B_s^{t,x}) H(t,s)\right] - \e\left[h(B_s^{r,x}) H(r,s)\right] \right| \\ 
 \le  \e \left| \left[h(B_s^{t,x})  - h(B_s^{r,x} )\right] H (t,s) \right| +   \e \left| \left[h(B_s^{r,x})  - h(x)\right] [H (t,s) - H (r,s)] \right| \\
 \le \| h\|_\ep \left( \e \left[|B_t -B_r |^\ep |H( t,s)|\right] + \e \left[|B_s-B_r|^\ep |H(t,s)-H(r,s) |\right]  \right),
 \end{multline*}
 and, from Cauchy-Schwarz inequality, we deduce that
 \begin{align*}
	 \left| \e \left[h(B_s^{t,x}) H(t,s)\right] - \e\left[h(B_s^{r,x}) H(r,s)\right] \right| 
& \le \| h\|_\ep \left [    \frac{(t-r)^{\ep/2}}{(s-t)^{1/2}} + (s-r)^{\ep/2}  \frac{(t-r)^{1/2}}{(s-r)^{1/2} (s-t)^{1/2}}   \right ] \\
& =  \| h\|_\ep   \frac{(t-r)^{\ep/2}}{(s-t)^{1/2}}  \left [ 1  +    (t-r)^{\frac{1}{2}-\frac{\ep}{2}} (s-r)^{\frac{\ep}{2}-\frac{1}{2}}   \right ] \\
& \leq 2\,\| h\|_\ep\,   \frac{(t-r)^{\ep/2}}{(s-t)^{1/2}}. 
 \end{align*}

Coming back to~\eqref{eq:withH}, we write, for $0\leq r\leq t<T$,
\begin{align*}
	\nabla u(r,x)  & = \e\left[ g\left(B_T^{r,x}\right) H(r,T)  + \int_t^T F\left(s,B_s^{r,x}\right)H(r,s)\, ds \right] \\
	 &  \quad + \e\left[\int_r^t \left( F\left(s,B_s^{r,x}\right)-F(s,x)\right) H(r,s)\, ds\right],
\end{align*}
to have, taking into account the fact that $\|F(s,\cdot)\|_\ep \leq C (T-s)^{-1/2}$ by 
 \eqref{eq:regF},  
\begin{align*}
     | \nabla u(t,x)-\nabla u(r,x)| &\le C \left[ (t-r)^{\ep/2} \left( \frac{ \|g \|_\ep}{(T-t)^{1/2}}+ \int_t^T \frac{\|F(s,\cdot)\|_\ep}{ (s-t)^{1/2}}ds    \right)  
          +   \int_r^t \frac{ \|F(s,\cdot) \|_\ep}{ (s-r)^{(1-\ep)/2}} ds \right], \\
		  \\
		  & \le   C\Bigg [(t-r)^{\ep/2} \left( \frac{ \|g \|_\ep}{(T-t)^{1/2}}+ \int_t^T \frac{ds}{ (T-s)^{1/2} (s-t)^{1/2}}  \right)  \notag \\
		  & \quad +     \int_r^t \frac{ds}{ (T-s)^{1/2}  (s-r)^{(1-\ep)/2} } \Bigg] \\
		  & \le   C\left [(t-r)^{\ep/2} \left(\frac{ \|g \|_\ep}{(T-t)^{1/2} }  +  B(\tfrac{1}{2},\tfrac{1}{2}) \right)  +   \frac{(t-r)^{(\ep+1)/2}}{ (T-t)^{1/2}}\right ]  .
\end{align*}
\end{proof}

\section{Main results} 
\label{sec:main_results}
In this section, we state the  main result of this paper which gives the rate of convergence in the Wasserstein distance between the solution to the BSDE~\eqref{eq:mainBSDE} and the solution to the BSDE driven by the scaled random walk~\eqref{eq:mainBSDEn}. 
For the following we want to remind the reader of Remark~\ref{en:Zreg}. 
 
\begin{thm}\label{en:mainBSDE}
	Under \ref{H:A1}, for any $r\in [1,\infty [$, there exists a constant $C_r>0$   depending at most on $(T,\alpha,\ep,f,g,r)$ such that for all $x\in\rset$, 
 \begin{enumerate}[{\rm(i)}]
 \item	$	W_r\left(Y^{n,t,x}_s,Y^{t,x}_s\right) \leq C_r\, (1+|x|)^\ep \, n^{-(\alpha\wedge \frac{\ep}{2})} \quad \text{ for all }\,\, 0\le t \le s\le T, $
 \item    $W_r\left(Z^{n,t,x}_s,Z^{t,x}_s\right) \leq C_r\, \frac{(1+|x|)^\ep}{\sqrt{T-s}} \, n^{-(\alpha\wedge \frac{\ep}{2})} \quad \text{ for all} \,\,s \in [t,T[. $
\end{enumerate}		
\end{thm}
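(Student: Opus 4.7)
The plan is to reduce the two Wasserstein estimates to pointwise bounds on the discrepancies $u-U^n$ and $\nabla u-\Delta^n$, and then combine these with Rio's inequality~\eqref{eq:Wr} and the Hölder regularity of $u(s,\cdot)$ and $\nabla u(s,\cdot)$ from Section~\ref{sec:regularity_results_on_u_u_n_nabla_u_and_delta_n}. Since $Y^{t,x}_s=u(s,B^{t,x}_s)$, $Y^{n,t,x}_s=U^n(s,B^{n,t,x}_s)$, and likewise $Z^{t,x}_s=\nabla u(s,B^{t,x}_s)$, $Z^{n,t,x}_s=\Delta^n(s,B^{n,t,x}_s)$ on the relevant set (cf.~\eqref{eq:RZ}), a triangle inequality in $W_r$ gives
\[
W_r(Y^{n,t,x}_s,Y^{t,x}_s) \le \|(U^n-u)(s,B^{n,t,x}_s)\|_{\lp^r} + W_r\big(u(s,B^{n,t,x}_s),\, u(s,B^{t,x}_s)\big),
\]
and analogously for $Z$. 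The second term is handled at once: using $\|u(s,\cdot)\|_\ep\le C$ from Lemma~\ref{en:regu}, the elementary inequality $W_r(h(X),h(Y))\le \|h\|_\ep\, W_r(X,Y)^\ep$ (which follows from Jensen applied to the concave $x\mapsto x^\ep$ when $\ep<1$), and~\eqref{eq:Wr}, one gets the bound $C_r n^{-\ep/2}$; for $Z$, Lemma~\ref{en:regnabus}\eqref{nabla u-diff} replaces $\|u(s,\cdot)\|_\ep$ by $C/\sqrt{T-s}$, which produces the announced $1/\sqrt{T-s}$ factor.

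The remaining task is to prove the pointwise estimates
\[
|u(t,x)-U^n(t,x)|\le C(1+|x|)^\ep\, n^{-(\alpha\wedge \ep/2)}, \qquad |\nabla u(t,x)-\Delta^n(t,x)|\le \frac{C(1+|x|)^\ep}{\sqrt{T-t}}\, n^{-(\alpha\wedge \ep/2)},
\]
which, combined with the uniform moment estimate $\|(1+|B^{n,t,x}_s|)^\ep\|_{\lp^r}\le C_r(1+|x|)^\ep$, control the first $\lp^r$-term. For the $u-U^n$ bound I would subtract the representations~\eqref{eq: u} and~\eqref{U-n}. The terminal difference $\e[g(B^{t,x}_T)]-\e[g(B^{n,t,x}_T)]$ is $O(n^{-\ep/2})$ by Corollary~\ref{en:Hrate}. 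The generator integral decomposes into (a) a time-discretization contribution $|\os-s|^\alpha\le h^\alpha$, giving $n^{-\alpha}$ through the $\alpha$-Hölderness of $f$ in time; (b) a spatial contribution handled by Corollary~\ref{en:Hrate} with $\|F(s,\cdot)\|_\ep\le C/\sqrt{T-s}$ from~\eqref{eq:regF}, so the singularity is integrable and yields the $n^{-\ep/2}$ rate; and (c) Lipschitz-in-$(y,z)$ terms that reduce to $|u-U^n|$ and $|\nabla u-\Delta^n|$ evaluated at $(s,B^{n,t,x}_s)$, closed by a Gronwall argument.

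The gradient estimate is the delicate step, as already emphasised in the introduction. I would run the same scheme starting from~\eqref{eq:nabla-u} and~\eqref{eq:gradnu}, but now with the weights $(B_s-B_t)/(s-t)$ and $(B^n_s-B^n_t)/(\us-\ut)$. Here the second part of Corollary~\ref{en:Hrate} is crucial: it produces a $\delta(t,s)^{1/2}$ factor that tames the $1/(s-t)$ weight, and combining the resulting $1/\sqrt{s-t}$ kernel with $\|F(s,\cdot)\|_\ep\le C/\sqrt{T-s}$ yields the overall $1/\sqrt{T-t}$ singularity while keeping the Volterra-type kernel integrable. The time-shift $\os\to s$ inside $f$ additionally requires controlling $\nabla u(\os,\cdot)-\nabla u(s,\cdot)$, for which Proposition~\ref{en:timenabu} is exactly the right tool. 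The Lipschitz-in-$(y,z)$ contributions couple the two pointwise estimates, and the resulting singular integral inequality is closed simultaneously via a Volterra--Gronwall argument (Lemma~\ref{volterra_gronwall}).

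The main obstacle is precisely this closure: the natural estimate for $\nabla u-\Delta^n$ carries a $1/\sqrt{T-t}$ singularity, the kernel behaves like $1/\sqrt{s-t}$, and $\Delta^n$ is not $\ep$-Hölder uniformly in $n$ (the constant $c(h)$ appearing in the proof of Lemma~\ref{lem1} diverges as $h\to 0$), so naive Hölder estimates on $\Delta^n$ are unavailable. One has to exploit the averaged representation~\eqref{eq:gradnu} at each step of the bootstrap to transfer regularity from the integrated data back to $\Delta^n$, which is where the main technical work resides.
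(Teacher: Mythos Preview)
Your overall architecture is correct and matches the paper: Theorem~\ref{en:mainBSDE} is indeed obtained from the pointwise bounds on $u-U^n$ and $\nabla u-\Delta^n$ (stated there as Proposition~\ref{en:mainPDE}) via the triangle inequality in $W_r$, the $\ep$-H\"older regularity of $u(s,\cdot)$ and $\nabla u(s,\cdot)$, and Rio's estimate~\eqref{eq:Wr}. The decomposition you describe for the $u-U^n$ bound and for the gradient bound (using~\eqref{eq:nabla-u},~\eqref{eq:gradnu} and the second part of Corollary~\ref{en:Hrate}) is also the paper's route.

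Two points deserve correction. First, you misplace the role of Proposition~\ref{en:timenabu}. The time shift $\os\to s$ inside $f$ in the representation~\eqref{eq:gradnu} involves only $f(\os,\Theta^{n,t,x}_s)-f(s,\Theta^{n,t,x}_s)$, which is handled directly by the $\alpha$-H\"older continuity of $f$ in time and gives the $n^{-\alpha}$ contribution; no regularity of $\nabla u$ in time is needed there. Second, you gloss over the grid points for $Z$. The identity $Z^{n,t,x}_s=\Delta^n(s,B^{n,t,x}_s)$ from~\eqref{eq:RZ} fails when $s\in\{kh:k=n_t+1,\ldots,n\}$; at such $s$ one has $Z^{n,t,x}_s=\Delta^n(s-h/2,B^{n,t,x}_{s-h/2})$, and the theorem is stated for \emph{all} $s\in[t,T[$. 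The paper treats this case separately by writing
\[
W_r\big(Z^{t,x}_s,Z^{n,t,x}_s\big)\le W_r\big(Z^{t,x}_s,Z^{t,x}_{s-h/2}\big)+W_r\big(Z^{t,x}_{s-h/2},Z^{n,t,x}_{s-h/2}\big),
\]
bounding the second term by the off-grid case and the first by the time regularity of $\nabla u$; it is \emph{here} that Proposition~\ref{en:timenabu} is the right tool.

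A minor remark on the closure: the paper does not invoke the Volterra--Gronwall Lemma~\ref{volterra_gronwall} for $\gamma_n$. Instead it substitutes the $\gamma_n$-inequality (with singular kernel $1/\sqrt{\us-\ut}$) into itself once, which reduces the kernel to a bounded one, and then applies the ordinary Gronwall Lemma~\ref{lem:gronwall}. Your Volterra--Gronwall approach would likely also work, but you would need to argue carefully that the discrete kernel $1/\sqrt{\us-\ut}$ is dominated by $1/\sqrt{s-(\ut+h)}$ so that the hypothesis of Lemma~\ref{volterra_gronwall} is met.
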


This result is  a consequence of  the following proposition which gives the rate of the point-wise convergence of $U^n,$ solution to~\eqref{eq:mainPDEn}, towards the 
solution $u$ of the semilinear heat equation~\eqref{eq:mainPDE}. 

 \begin{prop}\label{en:mainPDE}
	Under \ref{H:A1} there exists a constant $C>0$  depending at most on $(T,\alpha,\ep,f,g)$  such that
	 \begin{enumerate}[{\rm(i)}]
  \item $ 
 		|u(t,x)-U^n(t,x)|  \leq  C\, (1+|x|)^\ep \, n^{-(\alpha\wedge \frac{\ep}{2})} \,\, \text{ for all } \,\,(t,x)\in\rset\times [0,T], $
 \item $
 		|\nabla u(t,x)-\Delta^n(t,x)| \leq C \, \frac{(1+|x|)^\ep}{\sqrt{T-t}} \, n^{-(\alpha\wedge \frac{\ep}{2})} \,\, \text{ for all } \,\, (t,x)\in\rset\times [0,T[. $
 	\end{enumerate}	%
  \end{prop}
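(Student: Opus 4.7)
The plan is to derive a coupled integral inequality for the weighted errors
$$\eta(t) := \sup_{x \in \rset} (1+|x|)^{-\ep}\,|u(t,x) - U^n(t,x)|, \qquad \zeta(t) := \sup_{x \in \rset} (1+|x|)^{-\ep}\sqrt{T-t}\,|\nabla u(t,x) - \Delta^n(t,x)|,$$
and then to close it via the singular Gronwall Lemma~\ref{volterra_gronwall} already invoked in the proof of Lemma~\ref{en:regnabus}. The building blocks are the Feynman--Kac representations \eqref{eq: u} and \eqref{eq:nabla-u} for $(u,\nabla u)$, the discrete analogues \eqref{U-n} and \eqref{eq:gradnu} for $(U^n,\Delta^n)$, the Rio-type Wasserstein estimates of Corollary~\ref{en:Hrate}, and the regularity results of Section~\ref{sec:regularity_results_on_u_u_n_nabla_u_and_delta_n}, in particular $\|F(s,\cdot)\|_\ep \leq C/\sqrt{T-s}$ from \eqref{eq:regF} and the a priori bounds of Lemma~\ref{lem1}.

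For the first assertion I subtract \eqref{U-n} from \eqref{eq: u} and decompose
\begin{align*}
u(t,x) - U^n(t,x) &= \e[g(B^{t,x}_T) - g(B^{n,t,x}_T)] + \int_t^T \e[F(s,B^{t,x}_s) - F(s,B^{n,t,x}_s)]\,ds \\
&\quad + \int_t^T \e[F(s,B^{n,t,x}_s) - f(\os,\Theta^{n,t,x}_s)]\,ds - \int_{\ut}^{t}\e[f(\os,\Theta^{n,t,x}_s)]\,ds.
\end{align*}
Corollary~\ref{en:Hrate}(i) applied to $g$ and to $F(s,\cdot)$ (combined with \eqref{eq:regF} and the integrable singularity $\int_t^T(T-s)^{-1/2}ds \leq 2\sqrt{T}$) controls the first two terms by $C(1+|x|)^\ep n^{-\ep/2}$. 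Assumption~\ref{H:A1}(ii) splits the third term into a Hölder-in-time contribution of order $\|f_t\|_\alpha h^\alpha$ and a Lipschitz-in-$(y,z)$ contribution which, after the moment estimate $\e[(1+|B^{n,t,x}_s|)^\ep] \leq C(1+|x|)^\ep$, inserts $C(1+|x|)^\ep \int_t^T [\eta(s) + \zeta(s)/\sqrt{T-s}]\,ds$ into the integral inequality. The residual boundary piece over $[\ut,t]$, of length $\leq h$, is absorbed via the crude bound $|\Delta^n(s,x)| \leq C h^{(\ep-1)/2}(1+|x|)^\ep$ obtained from Remark~\ref{rem3} and Lemma~\ref{lem1}(i), giving a contribution of order $h^{(\ep+1)/2}(1+|x|)^\ep \leq n^{-\ep/2}(1+|x|)^\ep$.

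For the second assertion I subtract \eqref{eq:gradnu} from \eqref{eq:nabla-u} and split analogously, the new feature being the weights $(B_s-B_t)/(s-t)$ and $(B^{n,t,x}_s - x)/(\us - \ut)$. The terminal piece is controlled by Corollary~\ref{en:Hrate}(ii) after rewriting the denominator discrepancy as $\tfrac{1}{T-t} - \tfrac{1}{T-\ut} = \tfrac{t - \ut}{(T-t)(T-\ut)}$, the near-terminal regime $T - t \lesssim h$ being handled separately by the pointwise bounds of Lemmas~\ref{en:regnabus}(b) and \ref{lem1}(ii). The linear $F$-piece uses the same ingredients with \eqref{eq:regF}. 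The nonlinear piece yields, via Cauchy--Schwarz and $\|(B_s - B_t)/(s-t)\|_{\lp^2} = 1/\sqrt{s-t}$, integrals of the form $\int_t^T [\eta(s) + \zeta(s)/\sqrt{T-s}]/\sqrt{s-t}\,ds$, and multiplication by $\sqrt{T-t}$ produces a Beta-type kernel that is uniformly integrable on $[t,T]$.

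Assembling Steps 1 and 2 yields a coupled Volterra inequality in $\eta + \zeta$ with a kernel whose Beta-type singularity at the endpoints is integrable, and Lemma~\ref{volterra_gronwall} then delivers $\eta(t) + \zeta(t) \leq C n^{-(\alpha \wedge \ep/2)}$, which is both assertions. The main obstacle will be Step 2: the second assertion of Corollary~\ref{en:Hrate} yields only a factor $\delta(t,s)^{1/2} n^{-\ep/2}$ which must be divided by $s-t$, so the near-diagonal contributions and the discrepancy between $T-t$ and $T-\ut$ must be absorbed either through integrable Beta singularities or through the pointwise bounds on $\nabla u$ and $\Delta^n$ in the regime $T-t \lesssim h$. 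The time-Hölder term $\|f_t\|_\alpha h^\alpha$ cannot be improved and is precisely what produces the $n^{-\alpha}$ component in the final rate $n^{-(\alpha \wedge \ep/2)}$.
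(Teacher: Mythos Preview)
Your strategy is the same as the paper's: compare the representations \eqref{eq: u}, \eqref{eq:nabla-u} with \eqref{U-n}, \eqref{eq:gradnu} term by term via Corollary~\ref{en:Hrate} and \eqref{eq:regF}, feed the nonlinear residual $F-F_n$ back into a coupled integral inequality for the weighted errors (the paper writes $(\beta_n,\gamma_n)$ without the $\sqrt{T-t}$ weight on the gradient error), and close with Gronwall. Two points where your sketch diverges from the paper's execution deserve care.

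First, invoking Lemma~\ref{lem1}(ii) in the near-terminal regime is not safe: its constant $C_n$ depends on $n$ (it came from the crude $c(h)\sim h^{-1/2}$ Hölder bound on $\Delta^n$), so it cannot deliver an $n$-independent estimate. The paper instead exploits that for $T-h\le t<s<T$ one has $\us=\ut$ and hence $B^{n,t,x}_s=x$, so the entire discrete integral in \eqref{eq:gradnu} vanishes and only the continuous term $\int_t^T \|F(s,\cdot)\|_\ep (s-t)^{(\ep-1)/2}ds\le C(T-t)^{\ep/2}\le Ch^{\ep/2}$ survives.

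Second, the paper does not close via a single Volterra inequality on $\eta+\zeta$. It keeps the unweighted $\gamma_n$, substitutes the $\beta_n$-bound \eqref{eq:almostY} into the $\gamma_n$-inequality \eqref{eq:almostZ}, and then iterates the resulting inequality once more (using $\us-\ut\ge s-(\ut+h)$ to dominate the discrete kernel $1/\sqrt{\us-\ut}$ by a continuous one and a Beta-integral computation) to arrive at $\gamma_n(t)\le C\bigl(n^{-(\alpha\wedge\ep/2)}/\sqrt{T-t}+\int_t^T\gamma_n(s)\,ds\bigr)$, to which the ordinary Gronwall Lemma~\ref{lem:gronwall} applies. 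Your route through $\zeta=\sqrt{T-t}\,\gamma_n$ produces a kernel carrying both a $1/\sqrt{s-t}$ and a $1/\sqrt{T-s}$ singularity, which Lemma~\ref{volterra_gronwall} as stated does not cover; it can be made to work, but requires an additional argument.
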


\begin{proof} 
We split the proof into three parts. We begin by studying $|u-U^n|$, we proceed by obtaining an estimate for $\nabla u - \Delta^n,$ and then we conclude with a Gronwall argument.

\subsection*{Estimate for $|u-U^n|$} 
%
From \eqref{eq: u} and   \eqref{U-n} we conclude that 
	\begin{equation}\label{eq:go}
		\begin{split}
			|u(t,x)-U^n(t,x)| & \leq \left| \e\left[g\left(B^{t,x}_T\right)\right] - \e\left[g\left(B^{n,t,x}_T\right) \right] \right|  \\
			 & + \left| \e\left[\int_t^T F\left(s,B^{t,x}_s\right) ds\right] - \e\left[\int_{\ut}^T f\left(\os,\Theta^{n,t,x}_s\right) ds\right] \right|.
		\end{split}	
	\end{equation}
	
Let $F_n$ be the function given by
\begin{equation}\label{eq:defFn}
	 F_n(s,x):=f(s,x,U^n(s,x),\Delta^n(s,x)) \quad \text{ for } \,\, (s,x) \in [0,T[\times \rset. 
\end{equation}
 Using the notation \eqref{theta-n} we also have that  $F_n(s,B^{n,t,x}_s)=f(s,\Theta^{n,t,x}_s)$. With this notation in hand, we have, 
 taking into account \eqref{eq:RY} and \eqref{eq:RZ},
	\begin{align*}
		\e\left[\int_{\ut}^T f\left(\os,\Theta^{n,t,x}_s\right) ds\right] & = \e\left[\int_{\ut}^T \left(f\left(\os,\Theta^{n,t,x}_s\right)-f\left(s,\Theta^{n,t,x}_s\right)\right)\right] +
		\e\left[\int_{t}^T F_n\left(s,B^{n,t,x}_s\right) ds\right] \\
		&\quad + \e\left[\int_{\ut}^t f\left(s,B^{n,t,x}_s,Y^{n,t,x}_s,Z^{n,t,x}_s\right) ds\right].
	\end{align*}
	In view of the regularity of $f$ in time, we have 
	\begin{equation*}
		\left| \e\left[\int_{\ut}^T \left(f\left(\os,\Theta^{n,t,x}_s\right)-f\left(s,\Theta^{n,t,x}_s\right)\right)ds\right] \right| \leq C\, n^{-\alpha}.
	\end{equation*}
	Moreover,  the Cauchy-Schwarz inequality leads to
	\begin{equation*}
		\left| \e\left[\int_{\ut}^t f\left(s,B^{n,t,x}_s,Y^{n,t,x}_s,Z^{n,t,x}_s\right) ds\right] \right| \leq (t-\ut)^{1/2}\, \e\left[\int_{\ut}^t \left|f\left(s,B^{n,t,x}_s,Y^{n,t,x}_s,Z^{n,t,x}_s\right)\right|^2\, ds\right]^{1/2},
	\end{equation*}
	and, taking into account the growth of $f$, we have
	\begin{align*} 
		&\left| \e\left[\int_{\ut}^t f\left(s,B^{n,t,x}_s,Y^{n,t,x}_s,Z^{n,t,x}_s\right) ds\right] \right| \\
		&\quad \quad \quad \leq C\,h^{1/2}\, \e\left[\int_{\ut}^T \left(1+|B^{n,t,x}_s|^{2\ep}+|Y^{n,t,x}_s|^2+|Z^{n,t,x}_s|^2\right) ds\right]^{1/2} \\
		&\quad \quad \quad \leq C \, n^{-1/2}\, (1+|x|)^{\ep},
	\end{align*}
    where we have used Lemma \ref{en:aeDB} to get
    \begin{align*}
      \e\left(\sup_{\ut \le s \le T} |Y^{n,t,x}_s|^2+\int_{\ut}^T |Z^{n,t,x}_s|^2
      ds\right)\le & C\e \left(|g(B^{n,t,x}_T)|^2+\int_{\ut}^T
      |f(s,B^{n,t,x}_{ s-},0,0)|^2ds\right)\\
      & \le C(1+|x|)^{2\ep}.
    \end{align*} 
	
	 Coming back to~\eqref{eq:go}, we derive the following inequality
	\begin{align} \label{u-minus-U-n}
			|u(t,x)-U^n(t,x)| & \leq \left| \e\left[g\left(B^{t,x}_T\right)\right] - \e\left[g\left(B^{n,t,x}_T\right) \right] \right| \notag \\
			 &\quad + \left| \int_t^T \e\left[ F\left(s,B^{t,x}_s\right) \right] ds - \int_{t}^T \e\left[F_n\left(s,B^{n,t,x}_s\right)\right] ds \right| \notag \\
			 &\quad + C\, (1+|x|)^\ep \, n^{-(\alpha \wedge \frac{1}{2})}.
	\end{align}
	From Corollary~\ref{en:Hrate} we get
	\begin{equation*}
		\left| \e\left[g\left(B^{t,x}_T\right)\right] - \e\left[g\left(B^{n,t,x}_T\right) \right] \right| \leq C\, \|g\|_\ep \, n^{-\frac{\ep}{2}}.
	\end{equation*}
We split the second term on the RHS of \eqref{u-minus-U-n} into two parts
	\begin{multline*}
		\left| \int_t^T \e\left[ F\left(s,B^{t,x}_s\right) \right] ds - \int_{t}^T \e\left[F_n\left(s,B^{n,t,x}_s\right)\right] ds \right|  \\ \leq   \int_t^T \left| \e\left[ F\left(s,B^{t,x}_s\right) \right] - \e\left[F\left(s,B^{n,t,x}_s\right)\right] \right| ds  + \int_t^T \e\left[ |F-F_n|\left(s,B^{n,t,x}_s\right) \right] ds. 
	\end{multline*}
	Since $F$ has the regularity~\eqref{eq:regF}, Corollary~\ref{en:Hrate} gives
\begin{equation*}
	\int_t^T \left| \e\left[ F\left(s,B^{t,x}_s\right) \right] - \e\left[F\left(s,B^{n,t,x}_s\right)\right] \right| ds  \leq C\, \int_t^T \frac{ds}{\sqrt{T-s}} \, ds \, n^{-\frac{\ep}{2}} \leq C\, n^{-\frac{\ep}{2}}.
\end{equation*}
By the above estimates  we derive from \eqref{u-minus-U-n} the  inequality
\begin{equation}\label{eq:presqueu}
	|u(t,x)-U^n(t,x)|  \leq C\, (1+|x|)^\ep \, n^{-(\alpha \wedge \frac{\ep}{2})} + \int_t^T \e\left[ |F-F_n|\left(s,B^{n,t,x}_s\right) \right] ds.
\end{equation}
Coming back to the definition of $F$ and $F_n$  (see \eqref{eq:defF} and \eqref{eq:defFn})
and using the Lipschitz continuity of $f$ with respect to $(y,z)$, we have
\begin{equation*}
	 |F-F_n|\left(s,B^{n,t,x}_s\right)  \leq \|f\|_\lip \left[|u-U^n|(s,B^{n,t,x}_s)+ |\nabla u - \Delta^n|(s,B^{n,t,x}_s)\right].
\end{equation*}
Setting for simplicity, for $s \in [0,T]$,
\begin{equation*} 
	    \beta_n(s) := \sup_{x\in\rset}\left\{ \frac{|u-U^n|(s,x)}{(1+|x|)^\ep}\right\} 
        \quad	   
	    \text{ and } \quad \gamma_n(s):=\sup_{x\in\rset}\left\{ \frac{|\nabla u-\Delta^n|(s,x)}{(1+|x|)^\ep}\right\}
\end{equation*}	
  for $s \in [0,T[$, Lemma \ref{en:regu}, Lemma \ref{en:regnabus} and Lemma~\ref{lem1} imply  that, for some $C>0$  and $C_n >0,$
   $$ \beta_n(s) \le C \quad \text{ for} \,\, s \in [0,T] \quad  \text{ and } \quad \gamma_n(s) \le \frac{C_n}{(T-s)^{(1-\ep)/2}} \quad \text{ for }\,\, s \in [0,T[,   $$
respectively. We deduce the following estimate
\begin{equation} \label{FminusF-n}
	 |F-F_n|\left(s,B^{n,t,x}_s\right)  \leq  \|f\|_\lip (1+|B^{n,t,x}_s|)^\ep \left(\beta_n(s) +  \gamma_n(s) \right)
\end{equation}
 and get, coming back to~\eqref{eq:presqueu}, for $0\leq t\leq T$ and for any $x\in\rset$,
\begin{equation*}
	|u(t,x)-U^n(t,x)|  
	\leq C\, (1+|x|)^\ep  \left(n^{-(\alpha \wedge \frac{\ep}{2})} + \int_t^T \left( \beta_n(s)+\gamma_n(s) \right) ds\right).
\end{equation*} 
We end up with the inequality 
\begin{equation*} 
	\beta_n(t)  \leq C\left( n^{-(\alpha\wedge \frac{\ep}{2})} + \int_t^T \left( \beta_n(s)+\gamma_n(s) \right) ds \right), \quad  t \in [0,T],
\end{equation*}
and since $\gamma_n$ belongs to $\lp^1[0,T],$ Gronwall's inequality  (Lemma \ref{lem:gronwall})  gives
\begin{equation}\label{eq:almostY}
\beta_n(t) \leq C\left( n^{-(\alpha\wedge \frac{\ep}{2})} + \int_t^T \gamma_n(s)  ds\right), \quad t \in [0,T].
\end{equation} 
\subsection*{Estimate for $|\nabla u -\Delta^n|$} 
%
In order to take advantage of the previous inequality, we need to estimate $\gamma_n(s)$. To do this, we  use the  representations \eqref{eq:nabla-u} and \eqref{eq:gradnu}.
We will divide the study into two parts
\begin{align*} 
  | \nabla u(t,x)- \Delta^n (t,x)|\le |\mbox{ $g$ difference }| + |\mbox{ $f$  difference }|. 
\end{align*}

\paragraph{Study of the $g$ difference} 
%
We have
\begin{align} \label{g-difference}
  &\e\left[g\left(B^{t,x}_T\right)\frac{B^{t,x}_T -
  x}{T-t}\right]-\e\left[g\left(B^{n,t,x}_T\right)\frac{B^{n,t,x}_T-x}{T-\ut}\right] \notag\\
  & \qquad = \e\left[g(x+B_T-B_t)(B_T-B_t)\right]\left(\frac{1}{T-t}-\frac{1}{T-\ut}\right)  \notag \\
  &\qquad\quad + \frac{1}{T-\ut}\left(\e\left[g(x+B_T-B_t)(B_T-B_t)\right]-\e\left[g(x+B^n_T-B^n_t)(B^n_T-B^n_t)\right] \right).
\end{align}
For the first term, since
\begin{equation*}
	\e\left[g(x+B_T-B_t)(B_T-B_t)\right] = \e\left[\left(g(x+B_T-B_t)-g(x)\right)(B_T-B_t)\right],
\end{equation*}
we have, using the fact that $g$ is $\ep$-Hölder continuous,
\begin{align*}
\left|\e\left[g(x+B_T-B_t)(B_T-B_t)\right]\left(\frac{1}{T-t}-\frac{1}{T-\ut}\right)\right| & \leq \|g\|_{\ep}\, \e\left[|B_T-B_t|^{1+\ep}\right]\frac{t-\ut}{(T-t)(T-\ut)} \\
& \le \|g\|_{\ep}\, \frac{t-\ut}{(T-t)^{\frac{1-\ep}{2}}(T-\ut)}.
\end{align*}
But,  exploiting  that $(t-\ut)^{{1-\frac{\ep}{2}}}\le (T-\ut)^{{1-\frac{\ep}{2}}}$ and
$\frac{1}{(T-\ut)^{\frac{\ep}{2}}}\le \frac{1}{(T-t)^{\frac{\ep}{2}}}$, we obtain
\begin{equation*}
	\frac{t-\ut}{(T-t)^{\frac{1-\ep}{2}}(T-\ut)} = \frac{(t-\ut)^{\frac{\ep}{2}}(t-\ut)^{1-\frac{\ep}{2}}}{(T-t)^{\frac{1-\ep}{2}}(T-\ut)^{1-\frac{\ep}{2}}(T-\ut)^{\frac{\ep}{2}}} \leq \frac{(t-\ut)^{\frac{\ep}{2}}}{\sqrt{T-t}} \leq \frac{h^{\frac{\ep}{2}}}{\sqrt{T-t}},
\end{equation*}
from which we deduce that
\begin{equation*}
	\left |\e\left[g(x+B_T-B_t)(B_T-B_t)\right]\left(\frac{1}{T-t}-\frac{1}{T-\ut}\right) \right |  \leq T^{\frac{\ep}{2}}\,\|g\|_{\ep}\frac{n^{-\frac{\ep}{2}}}{\sqrt{T-t}}.
\end{equation*}
Since $\delta(t,T)=T-\ut$, from Corollary~\ref{en:Hrate}, the absolute value of the second term on the RHS of \eqref{g-difference} is bounded by 
\begin{equation*}
	C\,\|g\|_{\ep}\,n^{-\frac{\ep}{2}}\,\frac{(T-\ut)^{1/2}}{T-\ut} = C\,\|g\|_{\ep}\, n^{-\frac{\ep}{2}}\,\frac{1}{\sqrt{T-\ut}} \leq C\,\|g\|_{\ep}\,n^{-\frac{\ep}{2}}\,\frac{1}{\sqrt{T-t}}.
\end{equation*}

Then we get
\begin{equation}\label{eq:dg}
\left| \e\left[ g\left(B^{t,x}_T\right)\frac{B^{t,x}_T -
  x}{T-t}\right]-\e\left[g\left(B^{n,t,x}_T\right)\frac{B^{n,t,x}_T-x}{T-\ut}\right] \right| 
\leq C\,\|g\|_{\ep}\,\frac{n^{-\frac{\ep}{2}}}{\sqrt{T-t}} .
\end{equation}


\paragraph{Study of the $f$ difference} 
%
 Here we have  to estimate for $t \in [0,T[$,
\begin{equation*}
	H(t)= \left|\int_t^T \e\left[F\left(s,B^{t,x}_s\right)\frac{B^{t,x}_s-x}{s-t}\right] ds - \int_{\ut+h}^T \e\left[f\left(\os,\Theta^{n,t,x}_s\right)\frac{B^{n,t,x}_s-x}{\us-\ut}\right] ds\right|.
\end{equation*}

When  $T-h\leq t<s<T,$  we observe that $B^{n,t,x}_s-x=0,$  and combining the regularity~\eqref{eq:regF} of $F$ with the estimate \eqref{holder-estimate} we obtain 
\begin{align*}
	H(t) \leq     \int_t^T \e  \left|F\left(s,B^{t,x}_s\right)\frac{B^{t,x}_s-x}{s-t}   \right|  ds  & \leq C\, \int_t^T \frac{ds}{(s-t)^{(1-\ep)/2}\sqrt{T-s}}\, ds = C (T-t)^{\frac{\ep}{2}}\, B\big(\tfrac{1+\ep}{2},\tfrac{1}{2} \big).
\end{align*}
Thus, for $T-h\leq t\leq T$, $H(t)\leq C\, n^{-\frac{\ep}{2}}$.

Let us now consider the case where $0\leq t<T-h$ i.e. $\ut+h \leq T-h$. We first write
\begin{align*}
	\int_t^T \e\left[F\left(s,B^{t,x}_s\right)\frac{B^{t,x}_s-x}{s-t}\right] ds & = \int_{\ut+h}^T \e\left[F\left(s,B^{t,x}_s\right)\frac{B^{t,x}_s-x}{s-t}\right] ds \\
       & \quad + \int_t^{\ut+h} \e\left[F\left(s,B^{t,x}_s\right)\frac{B^{t,x}_s-x}{s-t}\right] ds.
\end{align*}
For the second term of the RHS of this equality, we proceed as above and get 
\begin{align*}
	\left| \int_t^{\ut+h} \e\left[F\left(s,B^{t,x}_s\right)\frac{B^{t,x}_s-x}{s-t}\right] ds \right| 
	& \leq C\, \int_t^{\ut+h} \frac{ds}{(s-t)^{(1-\ep)/2}\sqrt{T-s}}\, ds. 
\end{align*}
But, since $\ut+h\leq T-h$, for $t<s<\ut+h$, $\dfrac{1}{\sqrt{T-s}} \leq \dfrac{1}{\sqrt{h}}$ and, since $0 < \ut+h - t\leq h$,
\begin{equation*}
	\int_t^{\ut+h} \frac{ds}{(s-t)^{(1-\ep)/2}\sqrt{T-s}}\, ds \leq \dfrac{1}{\sqrt{h}}\, \int_t^{\ut+h}\frac{ds}{(s-t)^{(1-\ep)/2}} = \frac{2}{1+\ep}\, \frac{(\ut+h-t)^{(1+\ep)/2}}{\sqrt{h}} \leq \frac{2 h^{\frac{\ep}{2}}}{1+\ep}.
\end{equation*}

Secondly, we split the term
\begin{equation*}
	\int_{\ut+h}^T \e\left[f\left(\os,\Theta^{n,t,x}_s\right)\frac{B^{n,t,x}_s-x}{\us-\ut}\right] ds
\end{equation*}
into two parts:
\begin{equation*}
	\int_{\ut+h}^T \e\left[f\left(s,\Theta^{n,t,x}_s\right)\frac{B^{n,t,x}_s-x}{\us-\ut}\right] ds = \int_{\ut+h}^T \e\left[F_n\left(s,B^{n,t,x}_s\right)\frac{B^{n,t,x}_s-x}{\us-\ut}\right]ds
\end{equation*}
and, the remaining term
\begin{equation*}
	R=\int_{\ut+h}^T \e\left[\left(f\left(\os,\Theta^{n,t,x}_s\right)-f\left(s,\Theta^{n,t,x}_s\right)\right)\frac{B^{n,t,x}_s-x}{\us-\ut}\right] ds.
\end{equation*}
But, due to the uniform regularity of $f$ in time, we have, since $\us-\ut = (\us+h)-(\ut+h) \geq s-(\ut+h)$,
\begin{align*}
	|R| & \leq \|f_t\|_\alpha\, \int_{\ut+h}^T (\os-s)^{\alpha} \frac{\e\left[|B^{n,t,x}_s-x|\right]}{\us-\ut}\, ds \leq \|f_t\|_\alpha\, h^{\alpha}\, \int_{\ut+h}^T \frac{ds}{\sqrt{\us-\ut}}\, ds \\
	& \leq \|f_t\|_\alpha\, h^{\alpha}\, \int_{\ut+h}^T \frac{ds}{\sqrt{s-(\ut+h})}\, ds \leq C\, h^{\alpha}.
\end{align*}

Thus, for $0\leq t <T-h$,
\begin{equation*}
	H(t) \leq \int_{\ut+h}^T \left|\e\left[F\left(s,B^{t,x}_s\right)\frac{B^{t,x}_s-x}{s-t}\right] - \e\left[F_n\left(s,B^{n,t,x}_s\right)\frac{B^{n,t,x}_s-x}{\us-\ut}\right] \right| ds + C\, n^{-(\alpha\wedge \frac{\ep}{2})}.
\end{equation*}

We split the integrand of the first term on the RHS of the inequality into three parts,
\begin{align*}
	& \left|\e\left[F\left(s,B^{t,x}_s\right)\frac{B^{t,x}_s-x}{s-t}\right] - \e\left[F_n\left(s,B^{n,t,x}_s\right)\frac{B^{n,t,x}_s-x}{\us-\ut}\right] \right| \\
	& \qquad \leq \left|\e\left[F\left(s,B^{t,x}_s\right)\frac{B^{t,x}_s-x}{s-t}\right] - \e\left[F\left(s,B^{t,x}_s\right)\frac{B^{t,x}_s-x}{\us-\ut}\right] \right| \\
	& \qquad\quad + \left|\e\left[F\left(s,B^{t,x}_s\right)\frac{B^{t,x}_s-x}{\us-\ut}\right] - \e\left[F\left(s,B^{n,t,x}_s\right)\frac{B^{n,t,x}_s-x}{\us-\ut}\right] \right| \\
	& \qquad\quad + \left|\e\left[F\left(s,B^{n,t,x}_s\right)\frac{B^{n,t,x}_s-x}{\us-\ut}\right] - \e\left[F_n\left(s,B^{n,t,x}_s\right)\frac{B^{n,t,x}_s-x}{\us-\ut}\right] \right| \\
	& \qquad =: H_1(s) + H_2(s) + H_3(s),
\end{align*}
so that
\begin{equation*}
	H(t) \leq \int_{\ut+h}^T \left(H_1(s)+H_2(s)+H_3(s)\right) ds + C\, n^{-(\alpha\wedge \frac{\ep}{2})}.
\end{equation*}

\paragraph{The term $H_1$} 
%
Since $B^{t,x}_s-x$ has mean zero,
  \begin{align*}
    H_1(s)\le \|F(s,\cdot)\|_{\ep}\e(|B^{t,x}_s-x|^{1+\ep})\left|\frac{1}{s-t}-\frac{1}{\us-\ut}\right|
  \end{align*}
and the regularity~\eqref{eq:regF} of $F$ gives 
\begin{equation*}
	H_1(s) \leq C\, \frac{(s-t)^{(1+\ep)/2}}{\sqrt{T-s}}\, \frac{(t-\ut)+(s-\us)}{(s-t)(\us-\ut)}
	=  \frac{C}{\sqrt{T-s}}\, \frac{(t-\ut)+(s-\us)}{(s-t)^{(1-\ep)/2}(\us-\ut)}.
\end{equation*}
Since $(t-\ut)=(t-\ut)^{\frac{\ep}{2}}(t-\ut)^{1-\frac{\ep}{2}} \leq h^{\frac{\ep}{2}}(\us-\ut)^{1-\frac{\ep}{2}}$ and the same upper bound holds for $s-\us$ (since $s-\us\leq h \leq \us-\ut$), we get
\begin{equation*}
	H_1(s) \leq \frac{C}{\sqrt{T-s}}\, \frac{h^{\frac{\ep}{2}}}{(s-t)^{(1-\ep)/2}(\us-\ut)^{\frac{\ep}{2}}}.
\end{equation*}
Finally, we remark that $\us-\ut = (\us+h)-(\ut+h) \geq s-(\ut+h)$ and $s-t \geq s-(\ut+h)$, to obtain
\begin{equation*}
	H_1(s) \leq \frac{C}{\sqrt{T-s}}\, \frac{h^{\frac{\ep}{2}}}{(s-(\ut+h))^{(1-\ep)/2}(s-(\ut+h))^{\frac{\ep}{2}}} \leq \frac{C\, h^{\frac{\ep}{2}}}{\sqrt{T-s}\, \sqrt{s-(\ut+h)}},
\end{equation*}
and, as a consequence,
\begin{equation*}
	\int_{\ut+h}^T H_1(s)\, ds \leq C\, h^{\frac{\ep}{2}}\, B\big (\tfrac{1}{2},\tfrac{1}{2} \big ).
\end{equation*}
%

\paragraph{The term $H_2$} 
%
We use once again the regularity~\eqref{eq:regF} of $F$ together with Corollary~\ref{en:Hrate} to obtain, for $\ut+h<s<T$,
\begin{equation*}
	H_2(s) \leq C\, n^{-\frac{\ep}{2}}\, \delta(t,s)^{1/2} \, \frac{1}{\sqrt{T-s}}\, \frac{1}{\us-\ut}.
\end{equation*}
We first use the fact that $\delta(s,t)=\max(s-t,\us-\ut)\le s-\ut \leq s-\us + \us-\ut \leq 2(\us-\ut)$ to get 
\begin{equation*}
	H_2(s) \leq C\, \frac{n^{-\frac{\ep}{2}}}{\sqrt{T-s}\sqrt{\us-\ut}},
\end{equation*}
and, since $\us-\ut \geq s-(\ut+h)$, we have
\begin{equation*}
	H_2(s) \leq C\, \frac{n^{-\frac{\ep}{2}}}{\sqrt{T-s}\sqrt{s-(\ut+h)}},
\end{equation*}
from which we deduce the estimate
\begin{equation*}
	\int_{\ut+h}^T H_2(s)\, ds \leq C\, n^{-\frac{\ep}{2}}\, B(\tfrac{1}{2},\tfrac{1}{2}).
\end{equation*}
%

\paragraph{The term $H_3$} 
%
For this last term, we come back to the definitions~\eqref{eq:defF} and \eqref{eq:defFn} of $F$ and $F_n,$ respectively.  By \eqref{FminusF-n} we have, for $\ut+h<s<T$,
\begin{align*}
	H_3(s) & \leq \e\left[|F-F_n|(s,B^{n,t,x}_s) \, \left| \frac{B^{n,t,x}_s-x}{\us-\ut} \right|\right] \\
	&\leq  \|f\|_\lip \, \left(\beta_n(s)+\gamma_n(s)\right) \e\left[\left(1+|B^{n,t,x}_s|\right)^\ep \left| \frac{B^{n,t,x}_s-x}{\us-\ut} \right| \right], 
\end{align*} 
and, by the Cauchy-Schwarz inequality, we derive the estimate
\begin{equation*}
	H_3(s) \leq C\, \left(\beta_n(s)+\gamma_n(s)\right) (1+|x|)^{\ep}\, \frac{1}{\sqrt{\us-\ut}}.
\end{equation*}


\paragraph{Summary for $H$} 
%
Let us summarize the estimates we got for $H$. For $T-h\leq t \leq T$, we have $H(t) \leq C\, n^{-\frac{\ep}{2}}$. For $0\leq t < T-h$ we obtained the upper bound
\begin{align*}
	H(t) & \leq C\, n^{-(\alpha\wedge \frac{\ep}{2})}  
	 + C\, (1+|x|)^\ep\, \int_{\ut+h}^T \left(\beta_n(s)+\gamma_n(s)\right)\, \frac{ds}{\sqrt{\us-\ut}}.
\end{align*}
Hence we have, for $t \in [0,T]$,
\begin{equation*}
	H(t) \leq C\, (1+|x|)^\ep \left( n^{-(\alpha\wedge \frac{\ep}{2})}  +  \int_{(\ut+h) \wedge T}^T \left(\beta_n(s)+\gamma_n(s)\right)\, \frac{ds}{\sqrt{\us-\ut}}\right).
\end{equation*}

%

Coming back to~\eqref{eq:dg}, we have, for any $x\in\rset$ and $t \in [0,T[$,
\begin{equation*}
	|\nabla u(t,x)-\Delta^n(t,x)| \\
	\leq C\, (1+|x|)^\ep \left( \frac{n^{-(\alpha\wedge \frac{\ep}{2})}}{\sqrt{T-t}} +  \int_{(\ut+h) \wedge T}^T \left(\beta_n(s)+\gamma_n(s)\right)\, \frac{ds}{\sqrt{\us-\ut}}\right),
\end{equation*}
and, as a byproduct, 
\begin{equation}\label{eq:almostZ}
 \gamma_n(t) \leq C \left( \frac{n^{-(\alpha\wedge \frac{\ep}{2})}}{\sqrt{T-t}} + \int_{(\ut+h) \wedge T}^T \left(\beta_n(s)+\gamma_n(s)\right)\, \frac{ds}{\sqrt{\us-\ut}}\right), \quad t \in [0,T[.
\end{equation}

\subsection*{Global estimate} 
%
Plugging \eqref{eq:almostY} into \eqref{eq:almostZ}, we get, for $t \in [0,T[$,
\begin{equation*}
	\gamma_n(t) \leq C \left( \frac{n^{-(\alpha\wedge \frac{\ep}{2})}}{\sqrt{T-t}} +  \int_{(\ut+h)\wedge T}^T  \left(n^{-(\alpha\wedge\frac{\ep}{2})} + \int_s^T \gamma_n(r)\, dr + \gamma_n(s)\right) \frac{ds}{\sqrt{\us-\ut}} \right).
\end{equation*}
For $t<T-h$, since $\us-\ut = (\us+h)-(\ut+h) \geq s-(\ut+h)$, we have
\begin{equation*}
	n^{-(\alpha\wedge\frac{\ep}{2})}\, \int_{\ut+h}^T
    \frac{ds}{\sqrt{\us-\ut}} \leq n^{-(\alpha\wedge\frac{\ep}{2})}\, \int_{\ut+h}^T \frac{ds}{\sqrt{s-(\ut+h)}} \leq 2\,n^{-(\alpha\wedge\frac{\ep}{2})}\,\sqrt{T-(\ut+h)} \leq 2\sqrt{T}\, n^{-(\alpha\wedge\frac{\ep}{2})}.
\end{equation*}
Again, we have,
\begin{equation*}
	\int_{\ut+h}^T \int_s^T \gamma_n(r)\, dr \frac{ds}{\sqrt{\us-\ut}}
	\le \int_{\ut+h}^T \int_s^T \gamma_n(r)\, dr \frac{ds}{\sqrt{s-(\ut+h)}},
\end{equation*}
from which we deduce
\begin{align*}
	\int_{\ut+h}^T \int_s^T \gamma_n(r)\, dr \frac{ds}{\sqrt{s-(\ut+h)}} & = \int_{\ut+h}^T \gamma_n(r) \int_{\ut+h}^T \ind_{r>s}\frac{ds}{\sqrt{s-(\ut+h)}} dr \\
	& = 2\, \int_{\ut+h}^T \gamma_n(r)\, \sqrt{r-(\ut+h)}\, dr  \\ 
	& \leq 2T\, \int_{\ut+h}^T \gamma_n(s)\, \frac{ds}{\sqrt{\us-\ut}}.
\end{align*}
It follows that, for $t \in [0,T[$,
\begin{equation*}
	\gamma_n(t) \leq C \left( \frac{n^{-(\alpha\wedge \frac{\ep}{2})}}{\sqrt{T-t}} +  \int_{(\ut+h)\wedge T}^T   \gamma_n(s) \frac{ds}{\sqrt{\us-\ut}} \right).
\end{equation*}
Thus,
\begin{align*}
	\gamma_n(t) & \leq C \frac{n^{-(\alpha\wedge \frac{\ep}{2})}}{\sqrt{T-t}}  \\
	& \quad + C^2\, \left(  \int_{(\ut+h)\wedge T}^T  \left(\frac{n^{-(\alpha\wedge \frac{\ep}{2})}}{\sqrt{T-s}} +  \int_{(\us+h)\wedge T}^T  \gamma_n(r)\frac{dr}{\sqrt{\ur-\us}}\right) \frac{ds}{\sqrt{\us-\ut}} \right).
\end{align*}
But we have
\begin{equation*}
	  \int_{(\ut+h)\wedge T}^T  \frac{n^{-(\alpha\wedge \frac{\ep}{2})}}{\sqrt{T-s}}\frac{ds}{\sqrt{\us-\ut}}\le  \int_{(\ut+h)\wedge T}^T  \frac{n^{-(\alpha\wedge \frac{\ep}{2})}}{\sqrt{T-s}}\frac{ds}{\sqrt{s-(\ut+h)}} = n^{-(\alpha\wedge \frac{\ep}{2})}\, B\big (\tfrac{1}{2},\tfrac{1}{2} \big),
\end{equation*}
and, for $t<T-h$, since $\us+h \le r$ if and only if $s<\ur$,
\begin{align*}
	 \int_{(\ut+h)\wedge T}^T  \int_{(\us+h)\wedge T}^T  \gamma_n(r)\frac{dr}{\sqrt{\ur-\us}}\frac{ds}{\sqrt{\us-\ut}}
	&= \int_{(\ut+2h)\wedge T}^T  \gamma_n(r) \int_{(\ut+h)\wedge T}^T  \ind_{s <\ur} \frac{ds}{\sqrt{\ur-\us}\sqrt{\us-\ut}} \, dr \\
	&= \int_{(\ut+2h)\wedge T}^T  \gamma_n(r)  \int_{(\ut+h)\wedge T}^{\ur} \frac{ds}{\sqrt{\ur-\us}\sqrt{\us-\ut}} \, dr.
\end{align*}
But $\ur-\us \geq \ur-s$ and $\us-\ut \geq s-(\ut+h)$, so we get
\begin{align*}
	 \int_{(\ut+h)\wedge T}^T  \int_{(\us+h)\wedge T}^T   \gamma_n(r)\frac{dr}{\sqrt{\ur-\us}}\frac{ds}{\sqrt{\us-\ut}}
	& \leq  \int_{(\ut+2h)\wedge T}^T \gamma_n(r) \int_{(\ut+h)\wedge T}^{\ur} \frac{ds}{\sqrt{\ur-s}\sqrt{s-(\ut+h)}} \, dr \\
	& = \, B\big (\tfrac{1}{2},\tfrac{1}{2} \big)\,  \int_{(\ut+2h)\wedge T}^T  \gamma_n(r)\, dr.
\end{align*}
Finally, we have
\begin{equation*}
	\gamma_n(t) \leq C \left(\frac{n^{-(\alpha\wedge \frac{\ep}{2})}}{\sqrt{T-t}} + \int_t^T \gamma_n(s)\, ds\right) \quad \text{ for } \,\, t \in [0,T[,
\end{equation*}
and from Gronwall's inequality (Lemma \ref{lem:gronwall})
\begin{equation*}
	\gamma_n(t) \leq C\, \frac{n^{-(\alpha\wedge \frac{\ep}{2})}}{\sqrt{T-t}} \quad  \text{ for } \,\, t \in [0,T[.
\end{equation*}

 Coming back to~\eqref{eq:almostY}, we have also, 
\begin{equation*}
	\beta_n(t) \leq C\, n^{-(\alpha\wedge \frac{\ep}{2})}  \quad  \text{ for } \,\, t \in [0,T].
\end{equation*}
%
%
%
The proof of Proposition~\ref{en:mainPDE} is complete.
\end{proof}

\begin{proof}[Proof of Theorem~\ref{en:mainBSDE}]
Theorem~\ref{en:mainBSDE} is mainly a corollary of Proposition~\ref{en:mainPDE}. 

Let us begin with the convergence of the $(Y^n)_n$ processes. Let us fix $r\geq 1$, $x\in\rset$ and $0\leq t\leq s\leq T$. We have
\begin{align*}
	W_r\left(Y^{t,x}_s,Y^{n,t,x}_s\right)& = W_r\left(u(s,B^{t,x}_s),U^n(s,B^{n,t,x}_s)\right) \\
	&\leq W_r\left(u(s,B^{t,x}_s),u(s,B^{n,t,x}_s)\right) + W_r\left(u(s,B^{n,t,x}_s),U^n(s,B^{n,t,x}_s)\right).
\end{align*}
Since, by Lemma~\ref{en:regu}, $u$ is $\ep$-Hölder continuous in space, uniformly in time, we have, by Hölder's inequality,
\begin{equation*}
	W_r\left(u(s,B^{t,x}_s),u(s,B^{n,t,x}_s)\right) \leq C\, W_r\left(B^{t,x}_s,B^{n,t,x}_s\right)^{\ep} \leq  C c_r^\ep T^{\frac{\ep}{2}}  \, n^{-\frac{\ep}{2}},
\end{equation*}
where we have used Proposition~\ref{en:fromRio} (see \eqref{eq:Wr}) for the last inequality.   Moreover, by  Proposition \ref{en:mainPDE},
\begin{equation*}
	W_r\left(u(s,B^{n,t,x}_s),U^n(s,B^{n,t,x}_s)\right) \leq \left\| u(s,B^{n,t,x}_s)-U^n(s,B^{n,t,x}_s)\right\|_{\lp^r} \leq C\, (1+|x|)^{\ep}\, n^{-(\alpha\wedge \frac{\ep}{2})}. 
\end{equation*} 
This gives the first part of the result. 

Let us continue with the convergence of the $(Z^n)_n$ processes. The proof is almost the same except for the grid points. Let $0\leq t<s\leq T$ with $s\neq \us$ i.e. $s\not\in\{kh, k=n_t+1,\ldots,n\}$. We have as before
\begin{multline*}
        W_r\left(\nabla u(s,B^{t,x}_s),\Delta^n(s,B^{n,t,x}_s)\right) \\
	\leq W_r\left(\nabla u(s,B^{t,x}_s),\nabla u(s,B^{n,t,x}_s)\right) + W_r\left(\nabla u(s,B^{n,t,x}_s),\Delta^n(s,B^{n,t,x}_s)\right).
\end{multline*} 
Since, by Lemma~\ref{en:regnabus}, $\nabla u(s,\cdot)$ is $\ep$-Hölder continuous, we have, by Hölder's inequality,
\begin{equation*}
	W_r\left(\nabla u(s,B^{t,x}_s),\nabla u(s,B^{n,t,x}_s)\right) \leq \frac{C}{\sqrt{T-s}}\, W_r\left(B^{t,x}_s,B^{n,t,x}_s\right)^{\ep} \leq \frac{ C c_r^\ep T^{\frac{\ep}{2}}}{\sqrt{T-s}}\, n^{-\frac{\ep}{2}},
\end{equation*}
where the last inequality follows from~\eqref{eq:Wr}.  Moreover, by Proposition \ref{en:mainPDE},
\begin{equation*}
	W_r\left(\nabla u(s,B^{n,t,x}_s),\Delta^n(s,B^{n,t,x}_s)\right) \leq \left\| \nabla u(s,B^{n,t,x}_s)-\Delta^n(s,B^{n,t,x}_s)\right\|_{\lp^r} \leq C\, \frac{(1+|x|)^{\ep}}{\sqrt{T-s}}n^{-(\alpha\wedge \frac{\ep}{2})},
\end{equation*} 
and the result follows,   in this case, from equalities \eqref{eqZ} and \eqref{eq:RZ}  together with Remark \ref{en:Zreg}. 

Let us now consider the case where $s\in\{kh, k=n_t+1,\ldots,n-1\}.$  In this case we have 
\begin{equation*}
	Z^{n,t,x}_s = \nabla^n U^n\left(s,B^{n,t,x}_{s-}\right)=\Delta^n\left(s-h/2,B^{n,t,x}_{s-h/2}\right) 
\end{equation*}
which is not equal to  $ \Delta^n\left(s,B^{n,t,x}_s\right)$  in general. 
We first write
\begin{align*}
  W_r\left(Z^{t,x}_s,Z^{n,t,x}_s\right) = W_r\left(Z^{t,x}_s,Z^{n,t,x}_{s-h/2}\right) \leq W_r\left(Z^{t,x}_s,Z^{t,x}_{s-h/2}\right) +W_r\left( Z^{t,x}_{s-h/2},Z^{n,t,x}_{s-h/2}\right).
\end{align*}
The second term on  the RHS can be bounded  by using our previous result, namely
\begin{equation*}
	W_r\left(Z^{t,x}_{s-h/2},Z^{n,t,x}_{s-h/2}\right) \leq  C\frac{n^{-(\alpha\wedge \frac{\ep}{2})}\, (1+|x|)^\ep}{\sqrt{T - (s- h/2)}} \leq C\frac{n^{-(\alpha\wedge \frac{\ep}{2})}\, (1+|x|)^\ep}{\sqrt{T - s}}.
\end{equation*}
For the first term, one can write 
\begin{multline*}
		W_r\left(Z^{t,x}_s,Z^{t,x}_{s-h/2}\right)=W_r\left(\nabla u(s,B^{t,x}_s),\nabla u(s-h/2,B^{t,x}_{s-h/2})\right) \\
		\leq W_r\left(\nabla u(s-h/2,B^{t,x}_{s-h/2}),\nabla u(s-h/2,B^{t,x}_{s})\right) + W_r\left(\nabla u(s-h/2,B^{t,x}_{s}),\nabla u(s,B^{t,x}_s)\right).
\end{multline*}
From Lemma~\ref{en:regnabus}, $\nabla u(s-h/2,\cdot)$ is $\ep$-Hölder continuous and we have
\begin{align*}
	W_r\left(\nabla u(s-h/2,B^{t,x}_{s-h/2}),\nabla u(s-h/2,B^{t,x}_{s})\right) & \leq \frac{C}{\sqrt{T-(s-h/2)}}\, W_r\left(B^{t,x}_{s-h/2},B^{t,x}_{s}\right)^{\ep} \\
	& \leq \frac{C \|G\|_{L^r}^\ep}{\sqrt{T-s}} \, \left (\frac{h}{2} \right )^{\frac{\ep}{2}}\!\!,
\end{align*}
where $G$ is a standard normal random variable.
Finally, for the last term, we  use Proposition~\ref{en:timenabu}  for the time regularity of $\nabla u$. We have
\begin{equation*}
	W_r\left(\nabla u(s-h/2,B^{t,x}_{s}),\nabla u(s,B^{t,x}_s)\right) \leq \left\|\nabla u(s-h/2,B^{t,x}_{s})-\nabla u(s,B^{t,x}_s) \right\|_{\lp^r} \leq \frac{C \, h^{\frac{\ep}{2}}}{\sqrt{T-s}}, 
\end{equation*}
and the estimate for $W_r\left(Z^{n,t,x}_s,Z^{t,x}_s\right)$ follows.

This ends the proof.  
\end{proof}
%
 
\appendix
\section{Appendix}
\subsection{A priori estimate for discrete BSDEs} 
\label{sub:a_priori_estimate_for_discrete_bsdes}

For the  convenience  of the reader, we prove a generalization of an a priori estimate for BSDEs driven by random walks given in  \cite[Proposition 7]{BDM02} (see also the appendix in \cite{BDM01}). This generalization allows to consider two different generators.

\begin{lemme}\label{en:aeDB}
	There exists an integer $n_0   \in \nset^*$ and a constant  $C>0$ both depending only on $T$ and $\|f\|_\lip$ such that for any couple of functions $(\bar g, \bar f)$ satisfying \ref{H:A1} and  for all $n\geq n_0 $ with $n> T\| \bar f\|_\lip $  and all    $(t,x,\bar x)\in[0,T]\times\rset^2$,
	\begin{multline*}
		\e\left[\sup_{t\leq s\leq T}\left|Y^{n,t,x}_s-\bar Y^{n,t,\bar x}_s\right|^2 +\int_{]t,T]} \left|Z^{n,t,x}_r-\bar Z^{n,t,\bar x}_r\right|^2 d\langle B^n\rangle_r\right]\\ 
		\leq C\, \e\left[\left|g\left(B^{n,t,x}_T\right)-\bar g\left(B^{n,t,\bar x}_T\right)\right|^2+ \int_{]t,T]} \left|\delta f\left(r,x,\bar x, \bar Y^{n,t,\bar x}_{r^-}, \bar Z^{n,t,\bar x}_r\right) \right|^2 d\langle B^n\rangle_r \right],
	\end{multline*}
	where $\delta f(r,x,\bar x,y,z)= f\left(r,B^{n,t,x}_{r^-},y,z\right)-\bar
    f\left(r,B^{n,t,\bar x}_{r^-},y,z\right)$ and, for all $(t,x)$, $\left(\bar Y^{n,t,x},\bar Z^{n,t,x}\right)$ denotes the solution to \eqref{eq:mainBSDEn} where $(g,f)$ is replaced by $(\bar g,\bar f)$.
\end{lemme}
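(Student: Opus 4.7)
The plan is to derive a one-step $\lp^2$ energy identity for the difference of solutions, iterate it, and then close the estimate by a discrete Gronwall argument once $h=T/n$ is small enough. Set $\delta Y_s:=Y^{n,t,x}_s-\bar Y^{n,t,\bar x}_s$, $\delta Z_s:=Z^{n,t,x}_s-\bar Z^{n,t,\bar x}_s$ and $\delta G:=g(B^{n,t,x}_T)-\bar g(B^{n,t,\bar x}_T)$. Subtracting the two recursions \eqref{y-eqn} applied to $(g,f)$ and $(\bar g,\bar f)$, one has, for $k=n_t,\ldots,n-1$,
\begin{equation*}
\delta Y_{kh}=\delta Y_{(k+1)h}+h\,\Delta_{k+1}-\sqrt{h}\,\delta Z_{(k+1)h}\xi_{k+1},
\end{equation*}
with $\Delta_{k+1}:=f((k+1)h,B^{n,t,x}_{kh},Y^{n,t,x}_{kh},Z^{n,t,x}_{(k+1)h})-\bar f((k+1)h,B^{n,t,\bar x}_{kh},\bar Y^{n,t,\bar x}_{kh},\bar Z^{n,t,\bar x}_{(k+1)h})$. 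The key decomposition is $\Delta_{k+1}=L_{k+1}+\delta f_{k+1}$, where $L_{k+1}$ groups the Lipschitz-in-$(y,z)$ variations computed with the common generator $f$, so that $|L_{k+1}|\leq \|f\|_{\lip}(|\delta Y_{kh}|+|\delta Z_{(k+1)h}|)$, while $\delta f_{k+1}=\delta f((k+1)h,x,\bar x,\bar Y^{n,t,\bar x}_{kh},\bar Z^{n,t,\bar x}_{(k+1)h})$ is precisely the quantity in the right-hand side of the target inequality.

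The next step is the one-step identity: since $\delta Z_{(k+1)h}$ and $\Delta_{k+1}$ are $\m F^n_{kh}$-measurable and $\e[\xi_{k+1}\mid\m F^n_{kh}]=0$, $\e[\xi_{k+1}^2\mid\m F^n_{kh}]=1$, squaring and conditioning on $\m F^n_{kh}$ gives
\begin{equation*}
\e\!\left[|\delta Y_{(k+1)h}|^2\mid\m F^n_{kh}\right]=|\delta Y_{kh}|^2-2h\,\delta Y_{kh}\Delta_{k+1}+h^2|\Delta_{k+1}|^2+h|\delta Z_{(k+1)h}|^2.
\end{equation*}
Applying $2|ab|\leq\beta a^2+\beta^{-1}b^2$ on the cross term through the splitting $\Delta_{k+1}=L_{k+1}+\delta f_{k+1}$, expanding $h^2|\Delta_{k+1}|^2$ likewise, summing over $k$ and taking total expectation produces, with $v_k:=\e|\delta Y_{kh}|^2$,
\begin{equation*}
v_k+(1-\beta)\!\!\sum_{j=k}^{n-1}\!\!h\,\e|\delta Z_{(j+1)h}|^2\leq\e|\delta G|^2+K_1\!\!\sum_{j=k}^{n-1}\!\!h\,v_j+K_2\!\!\sum_{j=k}^{n-1}\!\!h\,\e|\delta f_{j+1}|^2,
\end{equation*}
for constants $K_1,K_2$ depending only on $\|f\|_{\lip}$ and $\beta$. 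Fixing $\beta=1/2$ and choosing $n_0=n_0(T,\|f\|_{\lip})$ large enough so that the quadratic correction $h^2|\Delta_{k+1}|^2$ can be absorbed on the left, discrete Gronwall on the sequence $(v_k)$ yields
\begin{equation*}
\max_k v_k+\sum_{j=0}^{n-1}h\,\e|\delta Z_{(j+1)h}|^2\leq C\!\left(\e|\delta G|^2+\sum_{j=0}^{n-1}h\,\e|\delta f_{j+1}|^2\right).
\end{equation*}

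To upgrade the pointwise bound on $v_k$ to the supremum bound, I would iterate $\delta Y_{kh}=\e[\delta Y_{(k+1)h}\mid\m F^n_{kh}]+h\Delta_{k+1}$, getting
\begin{equation*}
\delta Y_{kh}=\e\!\left[\delta G+h\!\!\sum_{j=k}^{n-1}\!\!\Delta_{j+1}\,\Big|\,\m F^n_{kh}\right],
\end{equation*}
so that $(\delta Y_{kh})$ is the conditional expectation of a fixed square-integrable random variable $H$. Since $Y^n$ is piecewise constant, $\sup_{t\le s\le T}|\delta Y_s|=\max_k|\delta Y_{kh}|$, and Doob's $\lp^2$-inequality gives $\e\sup_s|\delta Y_s|^2\leq 4\e|H|^2$. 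Expanding $|H|^2$ by Cauchy--Schwarz in the sum, bounding $|L_{j+1}|^2\leq 2\|f\|_{\lip}^2(|\delta Y_{jh}|^2+|\delta Z_{(j+1)h}|^2)$ and substituting the previous bounds on $v_j$ and $\sum h\,\e|\delta Z|^2$ closes the estimate in the stated form.

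The main obstacle is the implicit character of the discrete BSDE in $Y^n_{kh}$: the Lipschitz piece $L_{k+1}$ contains $|\delta Y_{kh}|$, hence Young's inequality returns a term of order $h\|f\|_{\lip}|\delta Y_{kh}|^2$ on the right-hand side, which can only be moved to the left when $h=T/n\leq 1/n_0$. This is precisely the role of the threshold $n_0(T,\|f\|_{\lip})$. The supplementary condition $n>T\|\bar f\|_{\lip}$ in the statement is an independent prerequisite, needed to guarantee existence and uniqueness of $(\bar Y^{n,t,\bar x},\bar Z^{n,t,\bar x})$ through the fixed-point argument of \cite{BDM01}.
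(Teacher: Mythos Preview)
Your route is correct in outline but differs from the paper's. The paper does not re-derive the one-step energy identity: it invokes the local a priori estimate of \cite[Proposition~7]{BDM02} on a generic subinterval $[\sigma,\tau]$, then partitions $[\ut,T]$ into a fixed number $m$ of pieces (chosen so that the local constant $C(\tau-\sigma,n)\le 1/2$) and closes the global bound via a weighted norm $\sum_k (4c)^k\|\cdot\|_{I_k}^2$ rather than by discrete Gronwall. Your direct energy-plus-Gronwall argument is more self-contained and arguably more transparent; the paper's partition/weighted-norm trick is shorter because it leans on the cited result. Both yield constants depending only on $(T,\|f\|_{\lip})$.

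One genuine slip: from $\delta Y_{kh}=\e\big[\delta G+h\sum_{j=k}^{n-1}\Delta_{j+1}\,\big|\,\m F^n_{kh}\big]$ it does \emph{not} follow that $(\delta Y_{kh})_k$ is the conditional expectation of a \emph{fixed} random variable, since the inner sum starts at $j=k$. Hence $(\delta Y_{kh})_k$ is not a martingale and Doob's inequality does not apply as you wrote it. The standard fix is to set $H:=\delta G+h\sum_{j=n_t}^{n-1}\Delta_{j+1}$ and $A_k:=h\sum_{j=n_t}^{k-1}\Delta_{j+1}$, so that $\delta Y_{kh}=\e[H\mid\m F^n_{kh}]-A_k$; Doob controls the martingale part, while $\sup_k|A_k|\le h\sum_j|\Delta_{j+1}|$ is handled by Cauchy--Schwarz together with your already-obtained bounds on $\max_k v_k$ and $\sum_j h\,\e|\delta Z_{(j+1)h}|^2$. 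With this correction your proof closes. (A minor aside: in your identity the term $-h^2|\Delta_{k+1}|^2$ has the good sign and can simply be dropped; the threshold $n_0$ is really needed to absorb the implicit $h\|f\|_{\lip}|\delta Y_{kh}|^2$ produced by Young's inequality on the cross term, exactly as you identify at the end.)
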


\begin{proof}
	Let $n$ be such that  $T\|f\|_\lip/n < 1$    and $T\| \bar f\|_\lip/n < 1$.   Since, $\langle
    B^n\rangle_t - \langle B^n\rangle_s \leq (t-s) + T/n$, doing exactly
    the same computation as in the proof of  Proposition~7 in \cite{BDM02}, we get, for a universal constant $c\geq 1$,
	\begin{multline*}
	\e\left[\sup_{ \sigma \leq s \leq\tau}|\delta Y^n_s|^2 +\int_{]\sigma,\tau]} |\delta Z^n_r|^2 d\langle B^n\rangle_r\right]  \\
	\leq c\,\e\left[|\delta Y^n_\tau|^2 \right]  + C(\tau-\sigma,n)\, \e\left[\int_{]\sigma,\tau]} \left|\delta f\left(r,x,\bar x,\bar Y^{n,t,\bar x}_{r^-}, \bar Z^{n,t,\bar x}_r\right) \right|^2 d\langle B^n\rangle_r \right]  \\
	+ C(\tau-\sigma,n)\, \e\left[\sup_{ \sigma \leq s \leq\tau} |\delta Y^n_s|^2 + \int_{]\sigma,\tau]} |\delta Z^n_r|^2\,d\langle B^n\rangle_r \right]
	\end{multline*}
	 for all deterministic $0\le \sigma <\tau \le T, $ where 
	\begin{equation*}
			C(\alpha,n)= c\, \max\left(\|f\|_\text{Lip}^2,1\right)\, \max\left( \left(\alpha+T/n\right)^2,\left(\alpha+T/n\right)\right)  \quad \text{ for } \alpha \ge 0.
	\end{equation*}
	We choose an integer $m  \in \nset^*$ such that, with $\alpha=T/m$, $C(\alpha,+\infty)\leq 1/3$. Then, there exists an $n_0 \in \nset^*$ such that, for $n\geq n_0$  it holds $C(\alpha,n)\leq 1/2$   and $T\|f\|_\lip/n < 1.$  As soon as $\tau-\sigma \leq \alpha$ and $n\geq n_0$, we have
	\begin{multline} \label{delta-a-priori}
		\e\left[\sup_{\sigma \leq s\leq\tau}|\delta Y^n_s|^2 +\int_{]\sigma,\tau]} |\delta Z^n_r|^2 d\langle B^n\rangle_r\right]  \\
		\leq 2c\,\e\left[|\delta Y^n_\tau|^2 + \int_{]\sigma,\tau]} \left|\delta f\left(r,x,\bar x,\bar Y^{n,t,\bar x}_{r^-}, \bar Z^{n,t,\bar x}_r\right) \right|^2 d\langle B^n\rangle_r \right].
	\end{multline} 
	We set, for $0\leq k\leq m-1$, $I_k=\ut+ ] k(T-\ut)/m,(k+1)(T-\ut)/m]$ and we introduce the following norm on $\mathcal{S}^2\times\mathcal{M}^2$:
	\begin{equation*}
		\left\| \left(Y^n,Z^n\right)\right\|_s^2 =\sum_{k=0}^{m-1} (2 \times 2c)^k\, \e\left[\sup\nl_{t\in I_k} |Y^n_t|^2 + \int_{I_k}|Z^n_r|^2 d\langle B^n\rangle_r \right] .
	\end{equation*}
	 Considering $I_k  =]\sigma,\tau]$ and summing up  \eqref{delta-a-priori} over $k$ yields 
	\begin{multline*}
		 \left\| \left(\delta Y^n,\delta Z^n\right)\right\|_s^2 \leq \frac{1}{2}\, \left\| \left(\delta Y^n,\delta Z^n\right)\right\|_s^2 + (4c)^{m-1} (2c)\, \e\left[\left|g\left(B^{n,t,x}_T\right) - \bar g\left(B^{n,t,\bar x}_T\right) \right|^2\right]  \\ + (4c)^{m-1}\,  (2c)\,\e\left[\int_{]\ut,T]} \left|\delta f\left(r,x,\bar x,\bar Y^{n,\ut,x}_{r^-}, \bar Z^{n,\ut,x}_r\right) \right|^2 d\langle B^n\rangle_r \right],
	\end{multline*}
	from which we get 
	\begin{equation*}
		\left\| \left(\delta Y^n,\delta Z^n\right)\right\|_s^2  \leq  (4c)^{m}\, \e\left[\left|g\left(B^{n,t,x}_T\right) - \bar g\left(B^{n,t,\bar x}_T\right) \right|^2 + \int_{]\ut,T]} \left|\delta f\left(r,x,\bar x,\bar Y^{n,t,\bar x}_{r^-}, \bar Z^{n,t,\bar x}_r\right) \right|^2 d\langle B^n\rangle_r \right].
	\end{equation*}
This finishes the proof since $ \left\| \left(\delta Y^n,\delta Z^n\right)\right\|^2_s$  upper bounds  the LHS of the inequality  stated in the Lemma.
\end{proof}

\newpage
\subsection{Gronwall lemmas}
 
 We recall  the  Gronwall lemmas used in this article. 

\begin{lemme} \label{lem:gronwall}
 Suppose that $g(t), \alpha(t):[0,T[ \to [0,\infty[$ are integrable functions, and $\beta > 0.$
For $0 \le t <T ,$ if
$$ g(t) \le \alpha(t) + \beta\int_t^T   g(s)ds, $$
then
$$ g(t) \le \alpha(t) + \beta\int_t^T   \alpha(s)    e^{ \beta (s-t)}ds. $$
\end{lemme}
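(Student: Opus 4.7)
The plan is to linearize the inequality by introducing the tail integral
\[
G(t) := \int_t^T g(s)\,ds, \qquad t\in[0,T[,
\]
which is absolutely continuous with $G'(t)=-g(t)$ for a.e.\ $t$ and $G(T^-)=0$ (by integrability of $g$). Rewriting the hypothesis as $-G'(t)\le \alpha(t)+\beta G(t)$ a.e., i.e.
\[
G'(t)+\beta G(t)\ge -\alpha(t) \quad \text{for a.e.\ } t\in[0,T[,
\]
I would multiply by the integrating factor $e^{\beta t}$ to obtain
\[
\bigl(e^{\beta t}G(t)\bigr)' \ge -\alpha(t)e^{\beta t} \quad \text{for a.e.\ } t\in[0,T[.
\]

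Next I would integrate this inequality from $t$ to $T$. Since $\alpha$ and $g$ are integrable on $[0,T[$ and $\beta\ge 0$, the function $e^{\beta \cdot} G(\cdot)$ is absolutely continuous with limit $0$ at $T$, so the fundamental theorem of calculus yields
\[
-e^{\beta t}G(t) \ge -\int_t^T \alpha(s)\,e^{\beta s}\,ds,
\]
which rearranges to
\[
G(t) \le \int_t^T \alpha(s)\,e^{\beta(s-t)}\,ds.
\]
Plugging this back into the original hypothesis $g(t)\le \alpha(t)+\beta G(t)$ gives exactly the claimed bound.

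The only non-routine point is justifying the calculation under the sole assumption that $g,\alpha$ are integrable (not necessarily continuous). I would handle this by working with the absolutely continuous representative $G$ of $t\mapsto\int_t^T g$, for which the product rule and fundamental theorem of calculus hold in the Lebesgue sense, so multiplying by the smooth factor $e^{\beta t}$ and integrating the resulting a.e.\ inequality is fully legitimate. I do not expect any genuine obstacle here; this is the classical backward Gronwall argument.
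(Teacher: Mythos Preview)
Your proof is correct and is the standard backward Gronwall argument via the integrating factor applied to the tail integral $G(t)=\int_t^T g(s)\,ds$. Note, however, that the paper does not actually prove this lemma: it merely states it in the appendix as a recalled standard fact (``We recall the Gronwall lemmas used in this article''), so there is no proof in the paper to compare your approach against.
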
 
\bigskip

The second lemma is of Volterra type. It can be either proved directly by a convolution argument or one can use 
\cite[Exercise 4, page 190]{henry}.

\begin{lemme}
\label{volterra_gronwall}
Assume a measurable $g:[0,T[\to [0,\infty[\in L_1([0,T[)$  and $\alpha,\beta>0$ such that 
\begin{align*}
g(t) \le \frac{\alpha}{\sqrt{T-t}} + \beta \int_t^T \frac{g(s)}{\sqrt{s-t}} ds 
\end{align*}
for all $t\in [0,T[$. Then $g(t) \le C \frac{\alpha}{\sqrt{T-t}}$ for $t\in [0,T[$ for a constant
$C=C(T,\beta)>0$.
\end{lemme}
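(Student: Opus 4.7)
The plan is to iterate the inequality once so that the singular kernel $1/\sqrt{s-t}$ is removed from the integral term, and then to apply the standard (non-singular) Gronwall lemma (Lemma \ref{lem:gronwall}). The measurability and $L^1$-integrability of $g$ on $[0,T[$ will justify the use of Fubini's theorem.

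First I would substitute the hypothesis into itself. Replacing $g(s)$ on the right-hand side by its bound yields, for $t\in[0,T[$,
\begin{align*}
g(t) \le \frac{\alpha}{\sqrt{T-t}} + \alpha\beta \int_t^T \frac{ds}{\sqrt{(s-t)(T-s)}} + \beta^2 \int_t^T\!\!\int_s^T \frac{g(r)}{\sqrt{(s-t)(r-s)}}\, dr\, ds.
\end{align*}
The first integral equals the beta function value $B(\tfrac{1}{2},\tfrac{1}{2})=\pi$, which is independent of $t$. For the double integral, by Fubini (justified by $g\ge 0$ and $g\in L^1$) and the same beta identity $\int_t^r (s-t)^{-1/2}(r-s)^{-1/2} ds = \pi$, one obtains
\begin{align*}
\beta^2 \int_t^T\!\!\int_s^T \frac{g(r)}{\sqrt{(s-t)(r-s)}}\, dr\, ds = \pi \beta^2 \int_t^T g(r)\, dr.
\end{align*}

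Next, using the trivial bound $\pi \le \pi\sqrt{T}/\sqrt{T-t}$, the constant term $\alpha\beta\pi$ may be absorbed into the singular inhomogeneity. Setting $\tilde\alpha := \alpha\,(1+\beta\pi\sqrt{T})$ and $\tilde\beta := \pi\beta^2$, we arrive at
\begin{align*}
g(t) \le \frac{\tilde\alpha}{\sqrt{T-t}} + \tilde\beta \int_t^T g(r)\, dr, \qquad t\in[0,T[.
\end{align*}
This is exactly the hypothesis of Lemma \ref{lem:gronwall} with the integrable inhomogeneity $\tilde\alpha/\sqrt{T-t}$. Applying it gives
\begin{align*}
g(t) \le \frac{\tilde\alpha}{\sqrt{T-t}} + \tilde\beta \int_t^T \frac{\tilde\alpha}{\sqrt{T-s}}\, e^{\tilde\beta(s-t)}\, ds \le \frac{\tilde\alpha}{\sqrt{T-t}} + 2\tilde\alpha\tilde\beta\,e^{\tilde\beta T}\sqrt{T-t},
\end{align*}
and bounding $\sqrt{T-t}\le T/\sqrt{T-t}$ collects everything into a bound of the desired form $C\alpha/\sqrt{T-t}$ with $C=C(T,\beta)>0$.

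I do not expect a serious obstacle here; the only point requiring care is the legitimacy of the Fubini step and of the Gronwall application, both of which follow at once from $g\in L^1([0,T[)$ and $g\ge 0$. The substitution step is the crucial idea: a single iteration converts the weakly singular Volterra kernel $(s-t)^{-1/2}$ into a bounded kernel, reducing the problem to the standard linear Gronwall lemma already recorded as Lemma \ref{lem:gronwall}.
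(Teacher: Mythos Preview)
Your proof is correct and is precisely the ``direct convolution argument'' the paper alludes to (the paper itself gives no detailed proof, only this hint together with a reference to Henry's book). The single self-substitution turns the weakly singular kernel into a bounded one via the beta integral, after which Lemma~\ref{lem:gronwall} applies; all steps are justified by the nonnegativity and integrability of $g$.
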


\subsection{Proof of Lemma \ref{en:regnabus}: Step 2}
\label{A: generator Hoelder}
We assume that  $0<\ep<1$; the case $\ep=1$ was treated in Step 1. For $\eta>0$, let us consider the function
\begin{align*}
	f^{\eta}(t,x,y,z) & = \inf\{ f(t,p,y,z)+\eta |x-p| : p\in\rset\} = \inf\{f(t,x-q,y,z)+\eta |q| : q\in\rset\}.
\end{align*}
When $f$ satisfies \eqref{eq:prop_f},  then  $f^\eta$ does as well and it is $\eta$-Lipschitz continuous w.r.t.~$x$. Moreover
 \begin{equation}
	 \label{eq:unifmaj}
 	\left| f^{\eta}(t,x,y,z)-f(t,x,y,z) \right| \leq c(\|f_x\|_\ep) \, \eta^{-\ep/(1-\ep)},
 \end{equation}
 with $c(t) = (\ep t)^{\ep/(1-\ep)} t(1-\ep)$. Indeed,
 \begin{align*}
 	f(t,x,y,z)  \geq f^\eta(t,x,y,z) & =  f(t,x,y,z) + \inf\{f(t,x-q,y,z)-f(t,x,y,z)+\eta |q| : q\in\rset\} \\
	& \geq f(t,x,y,z) + \inf\{ - \|f_x\|_\ep |q|^\ep +\eta |q| : q\in\rset\} \\
	& = f(t,x,y,z) - c(\|f_x\|_\ep)\, \eta^{-\ep/(1-\ep)}.
 \end{align*}
In particular, $|K_f - K_{f^\eta}|\le  c(\|f_x\|_\ep)\, \eta^{-\ep/(1-\ep)}$.
Let $(Y^{t,x,\eta},Z^{t,x,\eta})$ be the solution to the BSDE~\eqref{eq:mainBSDE} with
data $(g, f^\eta)$ and $u^\eta$ be the function $u^{\eta}(t,x) :=Y^{t,x,\eta}_t$. By the usual classical estimate for BSDEs
(see, for instance,  \cite[Proposition 2.1 and remarks]{elkaroui}  or \cite[Lemma 5.26]{GY}), 
there exists a $C>0$ such that, for all $(t,x)\in[0,T]\times\rset$,
\begin{align} \label{apriori}
\e \left[ \sup_{s \in [t,T]} |Y^{t,x,\eta} _s  - Y^{t,x} _s |^2  +   \int_t^T  |Z^{t,x,\eta} _s  - Z^{t,x} _s |^2ds  \right ]  \notag
& \le   C \e\int_t^T |(f^\eta -f)(s,B^{t,x}_s, Y^{t,x}_s, Z^{t,x}_s)|^2ds \notag \\
& \le   C\, T\, c(\|f_x\|_\ep)^2\, \eta^{-2\ep/(1-\ep)}.
 \end{align}
 In particular, $(u^\eta)_\eta$ converges to $u$, as $\eta\to+\infty$, uniformly on $[0,T]\times\rset$.
 

Proof of \eqref{ma_zhang_formula}, \eqref{nabla u-diff}, and \eqref{nabla u-bound}.
Since $f^\eta$ is Lipschitz continuous w.r.t. $(x,y,z)$ and satisfies \eqref{eq:prop_f} (uniformly in $\eta$), by Step 1, we know that $u^\eta\in \m C^{0,1}([0,T[\times\rset)$  and $Z^{t,x,\eta} _s = \nabla u^\eta(s,B^{t,x}_s)$ for a.e.~$(s,\omega)  \in [t,T[\times \Omega $  with
\begin{align} \label{nabla-u-bound}
 |\nabla u^\eta (s,x)|\le\frac{C}{(T-s)^{(1-\ep)/2}} \quad \text{ for } \,\,(s,x)  \in [0,T[\times \rset.
\end{align}
Taking into account the convergence of $(Z^{t,x,\eta})_\eta$ to $Z^{t,x}$ in $\lp^2( [t,T[\times\Omega)$, we have
\begin{align} \label{z-bound}
|Z^{t,x} _s|\le\frac{C}{(T-s)^{(1-\ep)/2}}, \,\, \text{  for    a.e.}  \,\, (s,\omega)\in [t,T[\times \Omega. 
\end{align}   
We define  the function
$$ v(t,x) := \e\left(g(B^{t,x}_T)\frac{B_T-B_t}{T-t} + \int_t^T
                     f(s,B^{t,x}_s, Y^{t,x}_s, Z^{t,x}_s)\frac{B_s-B_t}{s-t}ds\right)  \,\, \text{ for } \,(t,x)  \in [0,T[\times \rset .$$
First we show that 
$$ \nabla u^\eta (t,x) \to v(t,x) \quad \mbox{ for } \quad (t,x) \in [0,T[\times \rset $$
which also implies that $v:[0,T[\times \rset \to \rset$ is measurable.
For this we denote
\begin{align*}
 \widehat Z_r^{t,x}:=\e_r \left( g(B^{t,x}_T)\frac{B_T-B_r}{T-r}+ \int_r^T
                     f(s,B^{t,x}_s, Y^{t,x}_s, Z^{t,x}_s)\frac{B_s-B_r}{s-r}ds \right) \quad \text{ for } r\in [t,T[ . 
\end{align*}  
We also use \eqref{eq: nabla-u-b} for $(g, f^\eta)$ 
so that 
$$   \nabla u^\eta (r,B_r^{t,x})  
   = \e_r\left(g(B^{t,x}_T)\frac{B_T-B_r}{T-r} + \int_r^T
                     f^\eta(s,B^{t,x}_s, Y^{t,x,\eta}_s, Z^{t,x,\eta}_s)\frac{B_s-B_r}{s-r}ds\right). $$
Then  we apply the   conditional  Cauchy-Schwarz inequality  to  $\tfrac{(f^\eta -f)}{(s-r)^{\ep/4}}  \tfrac{B_s-B_r}{(s-r)^{1-\ep/4}} $   
and  use \eqref{eq:unifmaj} to get 
\begin{align*}
& \e| \nabla u^\eta (r,B_r^{t,x})-  \widehat Z_r^{t,x} |^2 \\
& \le \e \left | \e_r\left(\int _r^T(f^\eta (s,B^{t,x}_s, Y^{t,x,\eta}_s, Z^{t,x,\eta}_s)   
-    f(s,B^{t,x}_s, Y^{t,x}_s, Z^{t,x}_s))\frac{B_s-B_r}{s-r}ds\right) \right |^2 \\
& \le \frac{2}{\ep} T^{\frac{\ep}{2}} \, \e \int _r^T \Big( c(\|f_x\|_\ep)\,\eta^{-\ep/(1-\ep)}  +\|f_y\|_{\lip}| Y^{t,x,\eta}_s-Y^{t,x}_s| + \|f_z\|_{\lip} |Z^{t,x,\eta}_s-Z^{t,x}_s| \Big)^2 
\frac{1}{(s-r)^{\ep/2}}ds. 
 \end{align*}   
 Taking into account the bound for $Y$ given in~\eqref{apriori}, we have
 \begin{align}
	 \label{eq:forgronwall}
 \e| \nabla u^\eta (r,B_r^{t,x})-  \widehat Z_r^{t,x} |^2 & \leq C \left( \eta^{-2\ep/(1-\ep)} + 
  \e \int _r^T  |Z^{t,x,\eta}_s-Z^{t,x}_s|^2 \frac{1}{(s-r)^{\ep/2}}\, ds\right),
  \end{align}
  where $C$ depends on $T$, $\ep$, $\|f_x\|_\ep$ and $\|f\|_\lip$.
  
Now  we find a sequence $\eta_k \uparrow +\infty$ such that the RHS tends to $0$. Indeed, this follows by dominated convergence as  \eqref{apriori} guarantees a sequence $\eta_k \uparrow +\infty$  such that 
$$
Z^{t,x,\eta_k}_s \to   Z^{t,x}_s \text{  a.e. \,\, on \, }  [t,T[\times \Omega,
$$
and because of the equations  \eqref{nabla-u-bound} and \eqref{z-bound}. For $r\geq t$, $\nabla u^\eta (r,B_r^{t,x})$ converges to $\widehat Z_r^{t,x}$ in $\lp^2$ and, in particular, for $r=t$ we obtain the desired convergence 
$\nabla u^\eta (t,x) \to v(t,x)$. Because  of $Z^{t,x,\eta} _s = \nabla u^\eta(s,B^{t,x}_s)$ for a.e.~$(s,\omega)  \in [t,T[\times \Omega $ this also gives that
\begin{align} \label{Z-by-v}
Z_s^{t,x} = v(s,B_s^{t,x})= \widehat Z_s^{t,x} \text{  a.e. \,\, on \, }  [t,T[\times \Omega.
 \end{align}
Coming back to~\eqref{eq:forgronwall},  Gronwall's lemma  (Lemma \ref{volterra_gronwall}) gives,
\begin{align*}
\e| Z_r^{t,x,\eta}- Z_r^{t,x}  |^2  \le C\eta^{-2\ep/(1-\ep)} .
\end{align*}  
Especially, $  |\nabla u^\eta(t,x) - v(t,x)|  \le C\eta^{-\ep/(1-\ep)}$,  so that $\lim_{\eta\to\infty } \nabla u^\eta (t,x) = v(t,x)$.
 
Moreover, we conclude from this estimate and from  the continuity of  $ \nabla u^\eta$  on $[0,T[\times \rset$   that also  $v$ is continuous.   
Finally, $ v(t,x)=\nabla u(t,x)$ follows  from taking the limit $\eta \uparrow +\infty$  in 
$$  u^\eta (t,x) = u^\eta (t,0) + \int_0^x  \nabla u^\eta (t,y) dy, $$
where we use dominated convergence based on the inequality \eqref{nabla-u-bound}. 

%

\begin{thebibliography}{10}

\bibitem{Ambrosio05}
L.~Ambrosio, N.~Gigli and G.~Savar\'e,
\emph{Gradient Flows
in Metric Spaces and in the Space of Probability Measures},
Birkh\"auser,  Basel, Boston, Berlin (2005). 

\bibitem{B73}
J.M.~Bismut, \emph{Th\'eorie probabiliste du contr\^ole des diffusions},
Mem. AMS
\textbf{176} (1973).
\bibitem{BT04}
B.~Bouchard and N.~Touzi, \emph{Discrete-time approximation and {M}onte-{C}arlo
  simulation of backward stochastic differential equations}, Stochastic
  Process. Appl. \textbf{111} (2004), no.~2, 175--206.
 
 \bibitem{BDHPS} 
P.~Briand, B.~Delyon, Y.~Hu, E.~Pardoux and L.~Stoica,  \emph{Lp solutions of backward stochastic
differential equations}, Stochastic Process.  Appl. \textbf{108} (2003) , 109--129.
  
 \bibitem{BDM01}
Ph. Briand, B.~Delyon, and J.~M{\'e}min, \emph{Donsker--type theorem for
  {B}{S}{D}{E}s}, Electron. Comm. Probab. \textbf{6} (2001), 1--14,
  (electronic). 

\bibitem{BDM02}
\bysame, \emph{On the robustness of backward stochastic differential
  equations}, Stochastic Process. Appl. \textbf{97} (2002), no.~2, 229--253.
  
\bibitem{Jacod/Shiryaev} J.~Jacod and A.~N.~Shiryaev, {\it Limit theorems for stochastic processes}, Springer  (2003).  

\bibitem{elkaroui} 
N. El Karoui, S. Peng, and M.C. Quenez, {\it Backward Stochastic Differential Equations in Finance}, Math. Finance \textbf{7} (1997), 1--71.

\bibitem{GLL2}
C.~{Geiss}, C.~{Labart}, and A.~{Luoto}, \emph{{$L^2$-Approximation rate of
  forward -backward SDEs using random walk}}, arXiv:1807.05889 (2018).

\bibitem{GLL1}
\bysame, \emph{{Random walk approximation of BSDEs with H{\"o}lder continuous
  terminal condition}}, arXiv:1806.07674, to appear in Bernoulli, (2019).
  
\bibitem{GY}  
S.~Geiss and J.~Ylinen, {\it Decoupling on the Wiener Space, Related Besov Spaces, and Applications to BSDEs.}
arXiv:1409.5322, to appear Memoirs AMS,  (2019).

\bibitem{henry}
D.~Henry, \emph{Geometric theory of semilinear parabolic equations}, Lecture Notes in Mathematics 840,
Springer, 1981.

\bibitem{MZ02}
J.~Ma and J.~Zhang, \emph{Representation theorems for backward stochastic
  differential equations}, Ann. Appl. Probab. \textbf{12} (2002), no.~4,
  1390--1418.

\bibitem{PP90}
{\'E}.~Pardoux and S.~Peng, \emph{Adapted solution of a backward stochastic
  differential equation}, Systems Control Lett. \textbf{14} (1990), no.~1,
  55--61.

\bibitem{Rio09}
E.~Rio, \emph{Upper bounds for minimal distances in the central limit
  theorem}, Ann. Inst. Henri Poincar\'e Probab. Stat. \textbf{45} (2009),
  no.~3, 802--817.

\bibitem{Rio11}
\bysame, \emph{Asymptotic constants for minimal distance in the central limit
  theorem}, Electron. Commun. Probab. \textbf{16} (2011), 96--103.


\bibitem{Wal03}
John~B. Walsh, \emph{The rate of convergence of the binomial tree scheme},
  Finance Stoch. \textbf{7} (2003), no.~3, 337--361.

\bibitem{Zha04}
J.~Zhang, \emph{A numerical scheme for {BSDE}s}, Ann. Appl. Probab.
  \textbf{14} (2004), no.~1, 459--488.

\bibitem{Zha05}
\bysame, \emph{Representation of solutions to {BSDE}s associated with a
  degenerate {FSDE}}, Ann. Appl. Probab. \textbf{15} (2005), no.~3, 1798--1831.

\bibitem{Zha17}
\bysame, \emph{Backward Stochastic Differential Equations},  Springer, New York  (2017).
\end{thebibliography}

\providecommand{\bysame}{\leavevmode\hbox to3em{\hrulefill}\thinspace}
\providecommand{\MR}{\relax\ifhmode\unskip\space\fi MR }
\providecommand{\MRhref}[2]{%
  \href{http://www.ams.org/mathscinet-getitem?mr=#1}{#2}
}
\providecommand{\href}[2]{#2}

\end{document}